\newcommand{\kk}{{\bf k}}
\newcommand{\ak}{{\overline{\bf{k}}}}
\newcommand{\p}{\operatorname{\mathbb{P}}}
\newcommand{\C}{\operatorname{\mathbb{C}}}
\newcommand{\Z}{\operatorname{\mathbb{Z}}}
\newcommand{\GL}{\operatorname{GL}}
\newcommand{\aff}{\operatorname{Aff}}
\newcommand{\Bir}{\operatorname{Bir}}
\newcommand{\Cr}{\operatorname{Cr}}
\newcommand{\PGL}{\operatorname{PGL}}
\newcommand{\SL}{\operatorname{SL}}
\newcommand{\T}{T}
\newcommand{\cent}{\operatorname{Cent}} 
\newcommand{\zenter}{\operatorname{C}} 
\newcommand{\norm}{\operatorname{Norm}}
\newcommand{\aut}{\operatorname{Aut}}
\newcommand{\PSL}{\operatorname{PSL}}
\newcommand{\BirDiff}{\operatorname{BirDiff}}
\newcommand{\Gal}{\operatorname{Gal}}
\newcommand{\s}{\mathcal{S}}
\newcommand{\A}{\operatorname{\mathbb{A}}}
\newcommand{\Q}{\operatorname{\mathbb{Q}}}
\newcommand{\R}{{\operatorname{\mathbb{R}}}}
\newcommand{\F}{\operatorname{\mathbb{F}}}
\newcommand{\CC}{\operatorname{\mathbb{C}}}
\newcommand{\id}{\operatorname{id}}
\newcommand{\car}{\operatorname{char}}
\def\dashmapsto{\mapstochar\dashrightarrow}
\def\dashmapsto{\mapstochar\dashrightarrow}
\renewcommand{\to}{\longrightarrow}
\newcommand{\rat}{\dashrightarrow}
\theoremstyle{plain}
\newtheorem{theorem}{Theorem}[section]
\newtheorem{lemma}[theorem]{Lemma}
\newtheorem{proposition}[theorem]{Proposition}
\newtheorem{corollary}[theorem]{Corollary}
\theoremstyle{definition}
\newtheorem{definition}[theorem]{Definition}
\newtheorem{example}[theorem]{Example}
\newtheorem{remark}[theorem]{Remark}
\address{EPFL SB MATH, 
	Station 8,
	CH-1015 Lausanne, Switzerland
}
\email{christian.urech@gmail.com}
\address{Laboratoire angevin de recherche en math\'ematiques (LAREMA), CNRS, Universit\'e d'Angers, 49045 Angers cedex 1, France}
\email{susanna.zimmermann@univ-angers.fr}
\subjclass[2010]{20F28, 14E07}
\thanks{
During this project, the first author was supported by the Swiss National Science Foundation project P2BSP2\_175008, the second author was supported by the ANR Project FIBALGA ANR-18-CE40-0003-01. 
Both authors received funding from Projet PEPS 2019 ``JC/JC" and were supported through the programme ``Research in Pairs" by the Mathematisches Forschungs Institut Oberwolfach 2019.
}
\begin{document}
	
	\author{Christian Urech and Susanna Zimmermann}
	\title{Continuous automorphisms of Cremona groups}.
	\maketitle

\begin{abstract}
	We show that if a group automorphism of a Cremona group of arbitrary rank is also a homeomorphism with respect to either the Zariski or the Euclidean topology, then it is inner up to a field automorphism of the base-field. Moreover, we show that a similar result holds if we consider groups of polynomial automorphisms of affine spaces instead of Cremona groups.
\end{abstract}

\section{Introduction}
The {\it Cremona group in $d$-variables} over a field $\kk$ is the group of birational transformations of the projective $d$-space  $\p^d_{\kk}$ over $\kk$. Equivalently, it can be seen as the group of $\kk$-automorphisms of the field $\kk(x_1,\dots, x_d)$. Cremona groups have been the object of research for over 150 years and considerable progress in the understanding of their structure has been achieved recently. This article concerns  group automorphisms of $\Cr_d(\kk)$.

An important subgroup of $\Cr_d(\kk)$ is the automorphism group $\aut(\p_{\kk}^d)$, which is isomorphic to $\PGL_{d+1}(\kk)$.  Every field automorphism $\alpha$ of $\kk$ naturally induces a group automorphism on both, $\Cr_d(\kk)$ and $\PGL_d(\kk)$, which we denote by $g\mapsto {}^\alpha\!g$. The group automorphisms of $\PGL_d(\kk)$ are well-known: every automorphism of $\PGL_{d+1}(\kk)$ is the composition of an inner automorphism with an automorphism of the form $g\mapsto {}^\alpha\!g$ or $g\mapsto  {}^\alpha\!g^\vee$, where $\alpha$ is a field automorphism of $\kk$ and $g^\vee$ denotes the inverse of the transpose of $g$. Note that this result implies in particular, that all group automorphisms of $\PGL_{d+1}(\kk)$ are Zariski continuous. 

In \cite{MR2278755}, D\'eserti showed  that every group automorphism  of  $\Cr_2(\C)$ is inner up to automorphisms induced by a field automorphism of $\C$. 
It is a natural question, whether the theorem of D\'eserti holds in all dimensions and for all fields. A difficulty in studying group automorphisms of $\Cr_d(\kk)$ for $d>0$ is that no easy to handle set of generators of the group is known. However, $\Cr_d(\kk)$ can be endowed with the {\it Zariski topology}, a topology that extends the Zariski topology of $\PGL_{d+1}(\kk)$ (see Section~\ref{sec:Zariski}). We generalize D\'eserti's theorem to arbitrary dimensions and arbitrary fields of chracteristic $0$ under the additional assumption that the group automorphisms are also homeomorphisms with respect to the Zariski topology. 

\begin{theorem}\label{main:cremona}
	Let $\kk$ be a field of characteristic $0$ and let $\varphi\colon\Cr_d(\kk)\to\Cr_d(\kk)$ be a  group automorphism  that is a homeomorphism with respect to the Zariski topology, where $d\geq 2$. Then there exists a field automorphism $\alpha$ of $\kk$ and an element $f\in\Cr_d(\kk)$ such that $\varphi(g)=f\, ({}^\alpha\! g) f^{-1}$ for all $g\in\Cr_d(\kk)$.
\end{theorem}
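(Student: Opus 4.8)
The plan is to reduce the problem to the known structure of automorphisms of $\PGL_{d+1}(\kk)$ by showing that the Zariski-homeomorphism $\varphi$ must preserve, up to conjugation, the subgroup $\aut(\p^d_\kk)\cong\PGL_{d+1}(\kk)$. Let me think about how this reduction should go and where the difficulties lie.

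First I would try to pin down $\varphi$ on the linear subgroup. The key observation is that $\PGL_{d+1}(\kk)$ sits inside $\Cr_d(\kk)$ as the automorphism group of $\p^d$, and it is a maximal algebraic subgroup. I'd want to characterize this subgroup (or a conjugate of it) in purely topological-group-theoretic terms that $\varphi$ necessarily preserves. A natural idea is to use the structure of $\GM$- and $\GA$-actions, or more precisely to identify the diagonalizable and unipotent one-parameter subgroups of $\Cr_d(\kk)$ via their topological-dynamical properties, since $\varphi$ is continuous it sends algebraic subgroups to algebraic subgroups (at least Zariski-closed connected subgroups to such). The main structural fact I'd exploit is that a Zariski-continuous automorphism must send one-parameter subgroups to one-parameter subgroups and preserve commutation relations, so it should respect the distinction between $\GA$ and $\GM$.

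Next, once I can argue that $\varphi$ maps $\PGL_{d+1}(\kk)$ isomorphically (as a topological group) onto a conjugate of itself, I'd invoke the classification of automorphisms of $\PGL_{d+1}$ quoted in the introduction: after composing $\varphi$ with an inner automorphism of $\Cr_d(\kk)$ by the appropriate $f$, I may assume $\varphi$ restricts on $\PGL_{d+1}(\kk)$ to either $g\mapsto {}^\alpha\! g$ or $g\mapsto {}^\alpha\! g^\vee$ for some field automorphism $\alpha$. Composing further with the standard automorphism $g\mapsto {}^\alpha\! g$ (which is an honest automorphism of all of $\Cr_d(\kk)$), I reduce to the case where $\varphi$ fixes $\PGL_{d+1}(\kk)$ pointwise, or acts on it by the transpose-inverse. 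The transpose-inverse case should be ruled out by checking that it does not extend to $\Cr_d(\kk)$: for instance, the standard quadratic involution $\sigma$ and its conjugates by $\PGL_{d+1}$ generate $\Cr_d$, and the relations between $\sigma$ and diagonal tori are incompatible with an outer (duality) action on the linear part. This is the step where I'd carefully use generation and relations.

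The hard part will be the final extension/rigidity step: after arranging that $\varphi|_{\PGL_{d+1}(\kk)}=\id$, I must show $\varphi=\id$ on all of $\Cr_d(\kk)$. Since I do not have an explicit, manageable generating set for $\Cr_d$ in higher dimensions, the argument cannot simply check generators. Instead I'd argue that any $g\in\Cr_d(\kk)$ is determined by how it conjugates the linear group $\PGL_{d+1}(\kk)$, together with the base points / linear system data, and that $\varphi(g)$ must induce the same conjugation action as $g$ because $\varphi$ fixes $\PGL_{d+1}$ pointwise; a centralizer argument should then force $\varphi(g)g^{-1}$ to lie in the centralizer of $\PGL_{d+1}(\kk)$ in $\Cr_d(\kk)$, which is trivial. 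Making this centralizer computation rigorous, and ensuring continuity is used correctly to transport the $\PGL$-rigidity across the whole group (rather than only on an algebraic subgroup), is where I expect the real work and the main obstacle to lie.
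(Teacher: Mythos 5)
Your skeleton matches the paper's (conjugate $\varphi(\aut(\p^d_\kk))$ into $\aut(\p^d_\kk)$, apply Dieudonn\'e's classification, exclude the duality automorphism, then propagate the identity from the linear group to all of $\Cr_d(\kk)$), but each of the three substantive steps has a genuine gap. First, identifying $\varphi(\PGL_{d+1}(\kk))$ with a conjugate of $\PGL_{d+1}(\kk)$: continuity does give bounded degree (the paper's Lemma~\ref{lem:bounded_degree}), hence by Rosenlicht a regularization on some $d$-dimensional model $X_\kk$, but nothing in your ``one-parameter subgroups / $\GA$ versus $\GM$'' sketch forces $X$ to be $\p^d$ with the standard action. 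This is exactly where the paper imports its deep external input: the Cantat--Xie theorem on embeddings of finite-index subgroups of $\SL_{d+1}(\Z)$ into $\Bir(X_{\C})$ (applied through the congruence subgroups $\Gamma_p$), which pins down $X_{\overline{\kk}}\simeq\p^d_{\overline{\kk}}$, followed by Ch\^atelet's theorem to descend to $X_\kk\simeq\p^d_\kk$. You would also need to run the whole argument for $\varphi^{-1}$ and use a normalizer argument to upgrade the inclusion $\varphi(\aut(\p^d_\kk))\subset\aut(\p^d_\kk)$ to an equality, since Dieudonn\'e's theorem classifies automorphisms of $\PGL_{d+1}(\kk)$, not injective endomorphisms. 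Second, your proposed exclusion of the duality case rests on the claim that $\PGL_{d+1}(\kk)$ together with the standard quadratic involution generates $\Cr_d(\kk)$; that is the Noether--Castelnuovo theorem, which holds only for $d=2$ and is false for $d\geq 3$. The non-extendability of $g\mapsto {}^\alpha\!g^\vee$ is genuine content that the paper quotes as a separate result (Lemma~\ref{lem:extension}), proved by a relation argument that does not presuppose any generation statement.

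Third, and most importantly, the final rigidity step. Your centralizer argument needs the identity $\varphi(g)h\varphi(g)^{-1}=ghg^{-1}$ for all $h\in\PGL_{d+1}(\kk)$, i.e.\ that $\varphi$ fixes the conjugate subgroup $g\PGL_{d+1}(\kk)g^{-1}$ pointwise; but $ghg^{-1}$ does not lie in $\PGL_{d+1}(\kk)$ for general $g$, so knowing $\varphi|_{\PGL_{d+1}(\kk)}=\id$ tells you nothing about $\varphi(ghg^{-1})$ --- this is precisely what has to be proved, and the triviality of the centralizer of $\PGL_{d+1}(\kk)$ in $\Cr_d(\kk)$ cannot be brought to bear until it is. The paper's solution (Lemma~\ref{lem:deformation_exists} and Proposition~\ref{pro:determined_by_aut}) is a different mechanism, and it is where Zariski continuity of $\varphi$ enters in an essential, non-formal way: given $f$ and a general point $p$, choose $\alpha\in\aut(\p^d_\kk)$ whose only fixed points are $p$ and $q=f(p)$; the map $\alpha^{-1}f^{-1}\alpha f$ fixes $p$ and is a local isomorphism there, so its conjugates $\beta_t^{-1}(\alpha^{-1}f^{-1}\alpha f)\beta_t$ by the scalings $\beta_t$ extend across $t=0$ with limit the derivative at $p$, a linear map. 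Applying the continuous $\varphi$ to this one-parameter family and using $\varphi|_{\aut(\p^d_\kk)}=\id$ shows, by the converse direction of Lemma~\ref{lem:deformation_exists}, that $\alpha^{-1}\varphi(f)^{-1}\alpha\varphi(f)$ also fixes $p$; hence $\varphi(f)(p)\in\{p,f(p)\}$. Letting $p$ vary over a dense open set and using that $\kk$ is infinite forces $\varphi(f)=f$ or $\varphi(f)=\id$, and the latter is excluded by a short auxiliary argument with $\gamma f$ for $\gamma$ linear. Without this deformation-to-the-derivative idea, or a substitute for it, your plan does not close.
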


If $\kk=\R$ or $\kk=\C$, then $\Cr_d(\kk)$ can moreover be equipped with the Euclidean topology - a topology that makes $\Cr_d(\kk)$  a Hausdorff topological group and which restricts to the standard Euclidean topology on $\aut(\p^d_{\kk})$ (see Section~\ref{sec:Euclidean}). In this setting, Theorem~\ref{main:cremona} also holds if we consider the Euclidean topology instead of the Zariski topology:

\begin{theorem}\label{main:euclidean}
	Let $\kk=\C$ or $\kk=\R$ and let $\varphi\colon\Cr_d(\kk)\to\Cr_d(\kk)$ be a  group automorphism  that is a homeomorphism with respect to the Euclidean topology, where $d\geq 2$. Then there exists a field-automorphism $\alpha$ of $\kk$ that is Euclidean continuous and an element $f\in\Cr_d(\kk)$ such that $\varphi(g)=f\, ({}^\alpha\! g) f^{-1}$ for all $g\in\Cr_d(\kk)$.
\end{theorem}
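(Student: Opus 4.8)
The plan is to reduce Theorem~\ref{main:euclidean} to Theorem~\ref{main:cremona} by showing that a Euclidean homeomorphism of $\Cr_d(\kk)$ is automatically also a homeomorphism with respect to the Zariski topology, so that the conclusion of the Zariski case applies verbatim. Concretely, I would first invoke Theorem~\ref{main:cremona} to obtain a field automorphism $\alpha$ and an element $f\in\Cr_d(\kk)$ with $\varphi(g)=f\,({}^\alpha\!g)f^{-1}$; the only thing that remains to prove is that the $\alpha$ produced this way is Euclidean continuous (and hence, over $\R$, the identity, and over $\C$, either the identity or complex conjugation). So the heart of the matter is not re-deriving the algebraic structure of $\varphi$ but pinning down the regularity of the field automorphism.

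The key link is the restriction of $\varphi$ to the subgroup $\aut(\p^d_{\kk})\cong\PGL_{d+1}(\kk)$. I would argue that $\varphi$ maps a suitable algebraic subgroup to an algebraic subgroup of the same type (for instance, a maximal torus or the diagonalizable elements), using that Euclidean-continuous group automorphisms must preserve topological invariants such as connectedness of one-parameter subgroups and the closure of conjugacy classes. Restricting attention to a copy of $\GM(\kk)=\kk^*$ sitting inside $\PGL_{d+1}(\kk)$ as a diagonal one-parameter subgroup, the automorphism $\varphi$ induces, after conjugating by $f$, a continuous group automorphism of $\kk^*$ of the form $t\mapsto\alpha(t)$. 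A Euclidean-continuous group automorphism of $\kk^*$ forces $\alpha\colon\kk\to\kk$ to be continuous on $\kk^*$, and since $\alpha$ is additive and multiplicative, continuity on the multiplicative group propagates to continuity of $\alpha$ as a field automorphism. The upshot is that $\alpha$ is a Euclidean-continuous field automorphism, exactly as claimed.

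The main obstacle I anticipate is controlling the interaction between the conjugating element $f$ and the topology: $f$ is an arbitrary birational map, and conjugation by $f$ is a Euclidean homeomorphism of $\Cr_d(\kk)$ (one should check this using the topological-group structure recorded in Section~\ref{sec:Euclidean}), so $g\mapsto {}^\alpha\!g = f^{-1}\varphi(g)f$ is again a Euclidean homeomorphism. This reduces everything to showing that the map $g\mapsto {}^\alpha\!g$ is Euclidean continuous if and only if $\alpha$ is, which I would establish by testing against an explicit one-parameter family, say the family of scalings or translations $h_t$ depending algebraically on a parameter $t\in\kk$: the assignment $t\mapsto h_t$ is a Euclidean-continuous path, and ${}^\alpha\!h_t = h_{\alpha(t)}$, so continuity of $t\mapsto {}^\alpha\!h_t$ is equivalent to continuity of $t\mapsto\alpha(t)$.

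Once $\alpha$ is known to be Euclidean continuous, I would conclude by recalling the classification of continuous field automorphisms: the only Euclidean-continuous automorphism of $\R$ is the identity, and the only ones of $\C$ are the identity and complex conjugation, both of which are indeed continuous. This finishes the proof, giving $\varphi(g)=f\,({}^\alpha\!g)f^{-1}$ with $\alpha$ Euclidean continuous as required. The delicate point to handle carefully is that Theorem~\ref{main:cremona} is stated for the Zariski topology, so one must verify at the outset that a Euclidean homeomorphism of $\Cr_d(\kk)$ is also Zariski continuous; I expect this to follow because the Euclidean topology refines the Zariski topology and $\varphi$ preserves the algebraic subgroups whose Zariski structure drives the argument of Theorem~\ref{main:cremona}.
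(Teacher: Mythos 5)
There is a genuine gap, and it sits exactly at the step you flag as ``delicate'' and then wave away: the claim that a Euclidean homeomorphism of $\Cr_d(\kk)$ is automatically Zariski continuous, so that Theorem~\ref{main:cremona} applies. This does not follow from the fact that the Euclidean topology refines the Zariski topology. If $\varphi$ is a homeomorphism for a finer topology on both source and target, nothing forces it to be continuous for a coarser one: the preimage of a Zariski-closed set under $\varphi$ is only Euclidean closed, and Euclidean-closed sets need not be Zariski closed. (Compare the extreme case of the discrete topology, for which every bijection is a homeomorphism.) Your fallback justification --- that ``$\varphi$ preserves the algebraic subgroups whose Zariski structure drives the argument'' --- is circular: showing that $\varphi$ carries $\aut(\p^d_\kk)\cong\PGL_{d+1}(\kk)$ into a bounded-degree (hence regularizable) subgroup \emph{is} the hard content of the Euclidean case, and it is precisely what fails to be automatic once Zariski continuity is dropped, because Lemma~\ref{lem:bounded_degree} (Zariski-continuous images of varieties have bounded degree) rests on noetherianity of the Zariski topology and has no Euclidean analogue.

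The paper does not reduce one theorem to the other; it runs the two cases in parallel, and the Euclidean case requires a genuinely different argument at this point, namely Lemma~\ref{lem:PSL}: one writes $\PSL_{d+1}(\kk)$ as a bounded product of elementary subgroups $U_{ij}$, decomposes each $U_{ij}$ using the Euclidean-compact set $E_{01}$ (whose image has bounded degree because compact subsets of $\Cr_d(\kk)$ have bounded degree) together with the congruence subgroup $\Gamma_p\subset\SL_{d+1}(\Z)$, whose image has bounded degree by the Cantat--Xie theorem (Corollary~\ref{cor:cantatxie}); Rosenlicht and Ch\^atelet then give the conjugation into $\aut(\p^d_\kk)$, and Proposition~\ref{pro:determined_by_aut} (which the paper proves in a Euclidean version as well) finishes the argument. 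Your proposal contains no substitute for this step, so as written it does not prove the theorem. For what it is worth, the final portion of your proposal --- extracting Euclidean continuity of $\alpha$ by testing $\varphi$ against an algebraic one-parameter family $t\mapsto h_t$, using that conjugation by $f$ is a homeomorphism of the topological group $\Cr_d(\kk)$ and that the Euclidean topology restricts to the standard one on $\aut(\p^d_\kk)$ --- is sound, and it usefully makes explicit a point that the paper's own proof leaves implicit; but it is the easy part of the theorem.
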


The main ingredients for the proof of Theorem~\ref{main:cremona} and Theorem~\ref{main:euclidean} are a result of Cantat and Xie about embeddings of finite index subgroups of $\SL_{d+1}(\Z)$, the classification of automorphisms of $\PGL_{d+1}(\kk)$, and a topological deformation  argument.

D\'eserti's theorem implies that every group automorphism of $\Cr_2(\C)$ is Zariski continuous. It is therefore a natural question whether every group automorphism  of $\Cr_d(\kk)$ is Zariski continuous.

Observe that Theorem \ref{main:cremona} implies in particular, that if $\kk$ doesn't have any non-trivial field automorphisms (for instance, if $\kk=\Q$ or $\kk=\R$), then every continuous isomorphism of $\Cr_d(\kk)$ is inner.  In this setting, Theorem~\ref{main:euclidean} can be seen as an algebraic analogue of  \cite{filipkiewicz1982isomorphisms}, where it is shown that all group automorphisms of diffeomorphism groups are inner. Following this theme, we look in Section~\ref{sec:birdiff} at continuous automorphisms of the group of diffeomorphisms of $\p^d_{\R}(\R)$ that are induced by birational transformations.

In Section~\ref{sec:poly}, we consider instead of $\Cr_d(\kk)$ its sister, the group of polynomial automorphisms $\aut(\A^d_\kk)$, and obtain the following result:

\begin{theorem}\label{main:aut}
		Let $\kk$ be an infinite perfect field and let $\varphi\colon\aut(\A^d_\kk)\to\aut(\A^d_\kk)$ be a group automorphism that is a homeomorphism with respect to the Zariski topology. Then there exists a field automorphism $\alpha$ of $\kk$ and an element $f\in\aut(\A^d_\kk)$ such that $\varphi(g)=f\, ({}^\alpha\! g) f^{-1}$ for all $g\in\aut(\A^d_\kk)$.
\end{theorem}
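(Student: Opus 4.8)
The plan is to run the same three-step scheme as in the proof of Theorem~\ref{main:cremona}, replacing the projective linear group $\PGL_{d+1}(\kk)=\aut(\p^d_\kk)$ by the affine group $\aff_d(\kk)=\GL_d(\kk)\ltimes\kk^d$, which sits inside $\aut(\A^d_\kk)$ as the subgroup of automorphisms of degree $\leq 1$ and is a closed algebraic subgroup for the Zariski topology. The translation subgroup $\kk^d\cong\GA^d$ is the maximal normal abelian unipotent part of $\aff_d(\kk)$, hence characteristic, and $\GL_d(\kk)$ acts on it through the standard representation. The first task is to pin down $\varphi$ on $\aff_d(\kk)$, and the second is to propagate the result to all of $\aut(\A^d_\kk)$ by a continuity argument.

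First I would analyze the restriction of $\varphi$ to the lattice $\SL_d(\Z)\subset\GL_d(\kk)\subset\aut(\A^d_\kk)$. Composing with $\varphi$ yields an abstract embedding $\SL_d(\Z)\hookrightarrow\aut(\A^d_\kk)$, and I would invoke a rigidity result for embeddings of finite-index subgroups of $\SL_d(\Z)$ into $\aut(\A^d_\kk)$ (playing the role of the Cantat--Xie input used for Theorem~\ref{main:cremona}) to conclude that, after conjugating by a suitable $f\in\aut(\A^d_\kk)$ and composing with $g\mapsto{}^{\alpha^{-1}}\!g$ for an appropriate field automorphism $\alpha$, the map $\varphi$ restricts to the identity on $\SL_d(\Z)$. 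Since $\SL_d(\Z)$ is Zariski dense in $\SL_d(\kk)$ and $\varphi$ is a homeomorphism, this determines $\varphi$ on $\SL_d(\kk)$, and then the classification of group automorphisms of $\GL_d(\kk)$ (which reduces to the classification of automorphisms of $\PGL$ recalled in the introduction) forces $\varphi|_{\GL_d(\kk)}$ to be of the form $g\mapsto{}^\alpha\!g$. The transpose-inverse alternative $g\mapsto{}^\alpha\!g^\vee$ is excluded because it would require the standard representation of $\GL_d(\kk)$ on the translations to be isomorphic to its dual, which fails; by the same equivariance the action on $\kk^d$ is forced to be the corresponding $\alpha$-semilinear map, so that $\varphi|_{\aff_d(\kk)}$ coincides with $g\mapsto{}^\alpha\!g$.

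It then remains to show that the action on $\aff_d(\kk)$ determines $\varphi$ everywhere. Here I would use that $\varphi$ is $\aff_d(\kk)$-equivariant, since $\varphi(hgh^{-1})=h\varphi(g)h^{-1}$ for $h\in\aff_d(\kk)$ after the normalization, together with continuity: each one-parameter unipotent subgroup of $\aut(\A^d_\kk)$, such as an elementary subgroup $t\mapsto(x_1+t\,p(x_2,\dots,x_d),x_2,\dots,x_d)$, is a copy of $\GA$ normalized by a torus in $\GL_d(\kk)$, so the already-known action on that torus pins down $\varphi$ on the subgroup, and these subgroups generate the tame subgroup. Passing from the tame subgroup to all of $\aut(\A^d_\kk)$ would be carried out by a topological deformation argument, connecting an arbitrary automorphism to the affine group through algebraic families and transporting the computed values using that $\varphi$ is a homeomorphism; undoing the normalization yields $\varphi(g)=f\,({}^\alpha\!g)\,f^{-1}$ for all $g$.

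The main obstacle I anticipate is precisely this last propagation step. In dimension $\geq 3$ the group $\aut(\A^d_\kk)$ is not generated by its affine and elementary subgroups, since wild automorphisms exist, so one cannot reduce to explicit generators and must instead exploit the ind-variety structure together with the purely topological continuity of $\varphi$ to show that its values on algebraic families already determine it globally. A secondary difficulty is the rigidity input of the first step in small rank and in positive characteristic, both of which are permitted here: for $d=2$ the lattice $\SL_2(\Z)$ is virtually free and admits no higher-rank rigidity, so that case must be treated separately, for instance by replacing $\SL_d(\Z)$ with a richer algebraic subgroup of $\aff_d(\kk)$ whose image can still be controlled through continuity and the $\PGL$ classification.
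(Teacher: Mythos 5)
Your overall skeleton (normalize $\varphi$ on the linear/affine part, exclude the transpose-inverse alternative via the action on translations, then propagate by continuity) matches the paper's, but both of your key inputs have genuine gaps. First, the lattice-rigidity step fails. The theorem is stated for arbitrary infinite perfect fields, including positive characteristic, and if $\car(\kk)=p>0$ then $\SL_d(\Z)$ does not embed into $\GL_d(\kk)$ at all: the natural map $\SL_d(\Z)\to\SL_d(\kk)$ factors through the finite group $\SL_d(\F_p)$, so there is no lattice to which any rigidity theorem could be applied. Even over $\C$ the input you invoke does not exist in the form you need: Theorem~\ref{thm:cantatxie} concerns finite-index subgroups of $\SL_{d+1}(\Z)$ acting on varieties of dimension $d$, i.e.\ the critical dimension for $\SL_d(\Z)$ is $d-1$, and for an action of $\SL_d(\Z)$ on a $d$-dimensional variety (your situation) it yields no conjugacy conclusion whatsoever; indeed no such rigidity can hold inside $\Cr_d(\kk)$, since $\SL_d(\Z)$ also embeds there by monomial transformations of $\GM^d$, which are not linearizable. (You flag $d=2$ as a special case; in fact the whole step collapses for every $d$.) The paper avoids lattices entirely: it applies $\varphi$ to the diagonal torus $T_d(\kk)$, shows that the ind-closure of the image is a commutative algebraic group which, by counting involutions (squares when $\car(\kk)=2$) and using that a homeomorphism preserves Krull dimension, must be a torus of rank $d$ (Lemma~\ref{lem:torus}); by Bia{\l}ynicki-Birula's theorem this torus is conjugate to the standard one, centralizer/normalizer identities then force $\varphi(\GL_d(\kk))\subset\GL_d(\ak)$ (Lemma~\ref{lem:GL}), and a Hilbert~90 descent (Lemma~\ref{lem:Hilbert}, where perfectness enters) brings the conjugating element down to $\kk$, giving Lemma~\ref{lem:GLtoGL}. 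This torus argument is exactly what makes the theorem characteristic-free, and your route cannot replace it.

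Second, the propagation step is acknowledged but not actually supplied. You are right that $\aut(\A^d_\kk)$ is not generated by affine and elementary automorphisms in dimension $\geq 3$, but your proposed remedy (``connecting an arbitrary automorphism to the affine group through algebraic families and transporting the computed values'') is precisely the statement that needs proof, not a proof. The paper's Proposition~\ref{pro:determined_by_aff} requires no generation statement at all: given an arbitrary $f$ and a general point $p$, one forms $\alpha^{-1}f^{-1}\alpha f$ with $\alpha$ affine chosen so that its fixed locus is as small as possible through $p$ and $f(p)$, conjugates by the scalings $\beta_t$ centered at $p$, and lets $t\to 0$; Lemma~\ref{lem:deformation_exists} together with continuity of $\varphi$ forces $\varphi(f)(p)$ to be a fixed point of $\alpha$, and letting $p$ vary over a dense set yields $\varphi(f)=f$ or $\varphi(f)=\id$, whence $\varphi(f)=f$. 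This per-element deformation/fixed-point dichotomy is the missing mechanism in your proposal; your discussion of elementary subgroups and the tame group, while not wrong, is ultimately unnecessary once one has it. Your exclusion of $g\mapsto{}^\alpha g^{\vee}$ via the translation representation is sound and is essentially the paper's Lemma~\ref{lem:aff_id}, though note that the paper must also first prove that translations are sent into $\aff_d(\kk)$ (via centralizers of $\GL_{d-1}(\kk)$), a point your sketch assumes implicitly.
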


The group $\aut(\A^d_\kk)$ has the additional structure of an {\it ind-group} (see \cite{furterkraft} for details). Theorem \ref{main:aut} implies in particular that every ind-group automorphism of $\aut(\A^d_\kk)$ is inner, if $\kk$ is an infinite perfect field. In the case where $\kk$ is of characteristic zero and algebraically closed, this fact was proven by Kanel-Belov, Yu, and Elishev \cite{kanel2018augmentation}. Again, one can ask the question, whether all group automorphisms of $\aut(\A^d_\kk)$ are Zariski continuous. In dimension 2, the question has a positive answer if $\kk$ is uncountable \cite{MR2209276}. A partial generalization of \cite{MR2209276} to higher dimensions has been obtained in \cite{kraftstampfli}, \cite{stampfli}, \cite{urech2013automorphisms}. 

It could be interesting to generalize D\'eserti's theorem as well as the results of this paper to more general fields, in particular to finite fields. Another natural question is in as far similar results hold if we drop either the condition of surjectivity or injectivity of the group endomorphisms; these questions have been raised in \cite{MR2292279}, \cite{MR3985695}, and also \cite{balnclamyzimmermann}. 

\vskip\baselineskip

{\bf Acknowledgements:} The authors would like to thank 
Jean-Philippe Furter, 
Hanspeter Kraft, and
Immanuel van Santen (n\'e Stampfli) for interesting discussions related to this project.

\section{Topologies on the Cremona groups}
 Cremona groups carry the so-called {\it Zariski topology}, which was introduced by M. Demazure \cite{MR0284446}. Over local fields, this topology can be refined to the {\it Euclidean topology} - a construction due to J. Blanc and J.-P. Furter \cite{BlancFurter}. Both topologies restrict to the usual Zariski or Euclidean topology on the group $\aut(\p^d_{\kk})$. In this section we will briefly recall the definitions and main properties of those topologies.

\subsection{The Zariski topology}\label{sec:Zariski}

Let $\kk$ be a field and $X$ and $A$ irreducible algebraic varieties defined over $\kk$. Consider a birational map $f\colon A\times X\rat A\times X$ inducing an isomorphism between open dense subsets $U$ and $V$, where $U,V\subset A\times X$ have the property that the restriction of the first projection to $U$ and $V$ induces a surjective morphism onto  $A$. Every $\kk$-point $a\in A(\kk)$ induces a birational transformation $f_a$ of $X$ by mapping $x\in X$ to $p_2(f(a,x))$, where $p_2\colon A\times X\to X$ is the second projection. The map from $A(\kk)$ to $\Bir(X)$ given by $a\mapsto f_a$ is called a {\em morphism} (or {\em $\kk$-morphism}) from $A$ to $\Bir(X)$, and is denoted by $A\to\Bir(X)$. 

The Zariski topology is now the finest topology such that the preimages of closed subsets by morphisms are closed:

\begin{definition}
A subset $F\subset\Bir(X)$ is closed in the {\em Zariski topology} if for any algebraic $\kk$-variety $A$ and any $\kk$-morphism $A\to \Bir(X)$ the preimage of $F$ is closed in $A(\kk)$.
\end{definition}

Recall that, once we choose coordinates of $\p^d_{\kk}$, a Cremona transformation is given by $[x_0:\dots:x_d]\dashmapsto [f_0:\dots:f_d]$, where the $f_i\in\kk[x_0,\dots, x_d]$ are homogeneous polynomials of the same degree without a non-constant common factor. The {\it degree} of $f$ is then defined to be the degree of the $f_i$. We denote by $\Cr_d(\kk)_{\leq n}$ the set of Cremona transformations of degree $\leq n$ and  by $\Cr_d(\kk)_{n}$ the set of those of degree~$n$. 

J. Blanc and J.-P. Furter give an equivalent construction of the Zariski topology on the Cremona groups \cite[\S2]{BlancFurter} 
by defining a topology on $\Cr_d(\kk)_{\leq n}$ and extending it to $\Cr_d(\kk)$ as the inductive limit topology. The sets $\Cr_d(\kk)_n$ can be equipped canonically with the structure of a $\kk$-variety such that the elements in $\Cr_d(\kk)_n$ are exactly its $\kk$-points. However, the sets $\Cr_d(\kk)_{\leq n}$, while being closed, can not be given in any reasonable way the structure of an algebraic variety. This is part of the obstruction that $\Cr_d(\kk)$ can not be equipped with the structure of an ind-variety when endowed with the Zariski topology \cite[Theorem 2]{BlancFurter}.

Let $G$ be an algebraic group and $G\to\Cr_d(\kk)$ a morphism which is also a group homomorphism from the group $G(\kk)$ of $\kk$-points of $G$ to $\Cr_d(\kk)$. The image is called an {\it algebraic subgroup }of $\Cr_d(\kk)$. A subgroup of $\Cr_d(\kk)$ is an algebraic subgroup if and only if it Zariski closed and of bounded degree \cite[Corollary 2.18, Lemma 2.19]{BlancFurter}.

Note that $\kk$-morphisms of varieties into $\Cr_d(\kk)$ induce Zariski continuous maps from the $\kk$-points to $\Cr_d(\kk)$ by definition. Their image is of bounded degree, more generally, we have:

\begin{lemma}\label{lem:bounded_degree}
Let $A$ be an algebraic variety over  a field $\kk$ and $\varphi\colon A\to\Cr_d(\kk)$ a Zariski continuous map. Then $\varphi(A(\kk))$ is of bounded degree. 
\end{lemma}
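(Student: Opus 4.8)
The plan is to argue by contradiction, exploiting the fact recalled above that, as a topological space, $\Cr_d(\kk)$ is the inductive limit of the closed subsets $\Cr_d(\kk)_{\leq n}$. Thus a subset $F\subseteq\Cr_d(\kk)$ is closed as soon as each intersection $F\cap\Cr_d(\kk)_{\leq n}$ is closed in $\Cr_d(\kk)_{\leq n}$. Since finite sets of $\kk$-points are closed in each piece $\Cr_d(\kk)_{\leq n}$, this has the following consequence, which drives the whole argument: \emph{any} subset $S\subseteq\Cr_d(\kk)$ meeting every $\Cr_d(\kk)_{\leq n}$ in a finite set is closed, and moreover \emph{every} subset of such an $S$ is closed as well. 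It is worth noting that this makes the topology on $\Cr_d(\kk)$ very fine, which is why the conclusion will hold uniformly in $\kk$, with no countability or Baire-type hypothesis on the base field.

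Concretely, I would suppose for contradiction that $\varphi(A(\kk))$ is not of bounded degree. Then I can choose points $a_1,a_2,\dots\in A(\kk)$ with $\deg\varphi(a_i)$ strictly increasing; in particular the images $\varphi(a_i)$ are pairwise distinct (hence the $a_i$ are distinct), and the set $S:=\{\varphi(a_i):i\geq 1\}$ meets each $\Cr_d(\kk)_{\leq n}$ in a finite set. By the observation above, $S$ and all of its subsets are closed in $\Cr_d(\kk)$.

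Next I would transport this back through $\varphi$. Put $Z:=\{a_i:i\geq 1\}$. For any subset $W\subseteq Z$, the image $\varphi(W)$ is a subset of $S$, hence closed, so $\varphi^{-1}(\varphi(W))$ is closed in $A(\kk)$ by continuity of $\varphi$. Because the $\varphi(a_i)$ are distinct, one checks that $\varphi^{-1}(\varphi(W))\cap Z=W$, so $W$ is closed in the subspace topology on $Z$. As $W$ was arbitrary, $Z$ is an infinite \emph{discrete} subspace of $A(\kk)$.

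The final step, and the only real obstacle, is to rule this out: the underlying space of the variety $A$ is Noetherian, and a subspace of a Noetherian space is again Noetherian, hence quasi-compact; but an infinite discrete space is not quasi-compact (equivalently, $Z$ would carry the infinite strictly descending chain $Z\supsetneq Z\setminus\{a_1\}\supsetneq Z\setminus\{a_1,a_2\}\supsetneq\cdots$ of closed subsets, contradicting the descending chain condition). This contradiction shows that $\varphi(A(\kk))$ must be of bounded degree. The points requiring care are the verification that points—and hence finite sets—are closed in each $\Cr_d(\kk)_{\leq n}$, so that the inductive-limit criterion applies, and the passage to the $\kk$-points $A(\kk)$, whose Zariski topology is the subspace topology induced from the Noetherian space $A$ and is therefore itself Noetherian.
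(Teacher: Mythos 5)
Your proof is correct and follows essentially the same route as the paper's: assume unbounded degree, extract infinitely many elements of pairwise distinct degrees, note that this set carries the discrete topology, and contradict noetherianity of (a subspace of) $A(\kk)$ via continuity of $\varphi$. The only differences are cosmetic: you pull discreteness back to the chosen points in $A(\kk)$, whereas the paper pushes non-noetherianity of $\{f_n\}$ forward to the preimage $\varphi^{-1}(\{f_n\})$, and your write-up makes explicit the justification (inductive-limit criterion plus closedness of points in each $\Cr_d(\kk)_{\leq n}$) that the paper leaves implicit when asserting discreteness.
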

\begin{proof} 
Assume, for a contradiction, that $\varphi(A(\kk))$ is of unbounded degree. Then there exists an infinite sequence of elements $\{f_n\}_{n\in\Z_{>0}}\subset\varphi(A(\kk))$ such that all the $f_i$ are of different degree. This implies that the induced topology on  $\{f_n\}_{n\in\Z_{>0}}$ is discrete and so in particular not noetherian. But this yields that the induced topology on $\varphi^{-1}(\{f_n\}_{n\in\Z_{>0}})$ is not noetherian, which is a contradiction, since a subspace of a noetherian space is noetherian.
\end{proof}

\subsection{The Euclidean topology}\label{sec:Euclidean}
We now introduce the Euclidean topology on $\Cr_n(\kk)$ as defined in \cite[\S 5]{BlancFurter}.

Let $\kk[x_0,\dots,x_d]_n$ be the $\kk$-vector space of homogeneous polynomials of degree $n$ and consider the projectivisation $W_n(\kk)=\p((\kk[x_0,\dots,x_d]_n)^{d+1})$, which is the set of $\kk$-rational points of a projective space $W_n$ of dimension $r=(d+1){d+n \choose d} -1$. 
An element $h=[h_0:\dots:h_d]\in W_n(\kk)$ induces the rational map of $\p_\kk^d$ given by $\psi_h\colon [x_0:\dots:x_d]\dashmapsto[h_0(x_0,\dots,x_d):\dots:h_d(x_0,\dots,x_d)]$, and 
the set $H_n(\kk)\subset W_n(\kk)$ of elements inducing a birational map of $\p_\kk^d$ is locally closed in $W_n(\kk)$. There exists a locally closed subvariety  $H_n\subset W_n$ such that $H_n(\kk)$ is exactly the set of $\kk$-points of $H_n$  \cite[Lemma~2.4(2)]{BlancFurter}.
The map $\pi_n\colon H_n(\kk)\to\Cr_d(\kk)_{\leq n}$, $h\mapsto \psi_h$ corresponds to a morphism $H_n\to\Cr_d(\kk)$. It is surjective, continuous and closed with respect to the Zariski topology, and is thus a topological quotient \cite[Corollary 2.9]{BlancFurter}.  It turns out that the Zariski topology on $\Cr_d(\kk)$ is the inductive limit topology given by the closed sets $\Cr_d(\kk)_{\leq d}$ endowed with the quotient topology \cite[Proposition~2.20]{BlancFurter}. 

\begin{definition}
If $\kk$ is a local field, we endow the $W_n(\kk)$ and $H_n(\kk)$ with the Euclidean topology and define the {\it Euclidean topology} on $\Cr_d(\kk)$ to be the inductive limit topology given by the $\Cr_d(\kk)_{\leq n}$ endowed with the quotient topology $\pi_n\colon H_n(\kk)\to\Cr_d(\kk)_{\leq n}$. 
\end{definition}

Endowed with the Euclidean topology, $\Cr_d(\kk)$ is a Hausdorff topological group which is not metrisable, and any compact subset is of bounded degree \cite[Theorem~3, Lemma 5.16, Lemma 5.13]{BlancFurter}. By definition, the Euclidean topology is finer than the Zariski topology and we have the following property:

\begin{lemma}[{\cite[Lemma 2.11]{Blanc:2015ab}}]\label{lem:morph_eucl_cont}
Let $A$ be an algebraic variety over a local field $\kk$ and $\rho\colon A\to \Cr_d(\kk)$ a $\kk$-morphism. Then the corresponding map $A(\kk)\to\Cr_d(\kk)$ is Euclidean continuous. 
\end{lemma}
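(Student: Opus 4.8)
The plan is to factor $\rho$ through the defining quotient $\pi_n\colon H_n(\kk)\to\Cr_d(\kk)_{\leq n}$ by constructing an auxiliary variety over which the family acquires a genuine polynomial representation, and then to invoke the universal property of a topological quotient. First I would reduce to bounded degree: by Lemma~\ref{lem:bounded_degree} the image $\rho(A(\kk))$ has bounded degree, hence lies in a single piece $\Cr_d(\kk)_{\leq n}$. Since the Euclidean topology is the inductive limit of the closed pieces $\Cr_d(\kk)_{\leq n}$ and the image lies in one of them, it suffices to show that the corestricted map $A(\kk)\to\Cr_d(\kk)_{\leq n}$ is continuous for the quotient topology defined by $\pi_n$.

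Next I would introduce an incidence relation. Writing the rational map defining $\rho$ in coordinates as $f_a=[g_0(a,x):\dots:g_d(a,x)]$, with the $g_i$ homogeneous of degree $n$ in $x$ and coefficients regular on a dense open $A^\circ\subseteq A$, the condition ``$\psi_h=f_a$'' is polynomial in $(a,h)$ over $A^\circ$ and cuts out a locally closed subset of $A^\circ\times W_n$; let $Z\subseteq A\times W_n$ be its closure, with projections $p\colon Z\to A$ and $q\colon Z\to W_n$. The crucial geometric observation is that the fibre of $\pi_n$ over a birational map $g$ of degree $n'\leq n$ is exactly the set of representatives $[\ell\phi_0:\dots:\ell\phi_d]$, where $[\phi_0:\dots:\phi_d]$ is the reduced representative of $g$ and $\ell$ ranges over homogeneous forms of degree $n-n'$. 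Via $\ell\mapsto(\ell\phi_0,\dots,\ell\phi_d)$ this is a \emph{linear} subspace of $W_n$, hence a projective space contained entirely in $H_n$. Thus $q$ takes values in $H_n(\kk)$, and by construction $\pi_n\circ q=\rho\circ p$ on $\kk$-points.

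Now $q\colon Z\to H_n$ is an honest morphism of algebraic varieties, so the elementary fact that morphisms over a local field are Euclidean continuous on $\kk$-points (polynomial maps are continuous for the absolute value of $\kk$, and $\p^N(\kk)$ carries its analytic topology) shows $Z(\kk)\to H_n(\kk)$ is Euclidean continuous; composing with the continuous quotient map $\pi_n$ gives that $\rho\circ p=\pi_n\circ q$ is Euclidean continuous. To descend this to $\rho$ itself, I would verify that $p\colon Z(\kk)\to A(\kk)$ is surjective and a topological quotient map. Surjectivity holds because $\pi_n$ is surjective on $\kk$-points, so each $f_{a_0}$ has a representative $h\in H_n(\kk)$, whence $(a_0,h)\in Z(\kk)$; and $p$ is proper, being the projection of a subvariety closed in $A\times W_n$ with $W_n$ projective, so over a local field it is a continuous closed surjection of locally compact Hausdorff spaces, hence a topological quotient map. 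The universal property of the quotient then yields continuity of $\rho$, since $\rho\circ p$ is continuous.

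The main obstacle is the behaviour over the boundary $A\setminus A^\circ$: one must check that passing to the closure $Z$ preserves the two properties used above, namely that every fibre of $p$ still lands in $H_n$ (equivalently, that the limiting representatives genuinely represent $f_{a_0}=\rho(a_0)$ and not some lower-degree degeneration), and that $p$ remains surjective with the correct fibres on all of $A(\kk)$. This is precisely where one exploits that $\rho$ is a \emph{morphism} to $\Cr_d(\kk)$ in the Zariski sense, which forces the limit of the representatives over $a_0$ to represent $\rho(a_0)$; the identification of the fibres of $\pi_n$ as projective spaces then guarantees completeness and keeps $Z$ proper over $A$. Everything else in the argument is formal manipulation of quotient maps.
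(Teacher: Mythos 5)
The paper itself does not prove this lemma --- it simply cites \cite[Lemma 2.11]{Blanc:2015ab} --- so your argument has to stand on its own, and it has two genuine gaps, both sitting exactly where you chose to ``park'' the difficulty. The first and essential one: your claim that every point of the closure $Z$ lying over a boundary point $a_0$ still lands in $H_n$ and represents $\rho(a_0)$ is not a check, it \emph{is} the lemma, and you assert it rather than prove it. It is genuinely nontrivial: when $\dim A\geq 2$, even after clearing denominators and common factors the coefficients of $g$ can all vanish on a codimension-$2$ locus, and over such a point $a_0$ the incidence equations $h_ig_j(a_0,\cdot)=h_jg_i(a_0,\cdot)$ degenerate to $0=0$, so they constrain nothing; simultaneously the fibre over $a_0$ of the closed graph $\Gamma\subset A\times\p^d\times\p^d$ of the family acquires excess components beyond the graph of $\rho(a_0)$, and a limit $h$ of representatives could a priori have its graph inside those excess components. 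A correct proof needs an actual mechanism here, for instance: any point of $Z$ over $a_0$ lies on a curve $C\subset Z^\circ$ whose closure contains it; the restriction of a morphism $A\to\Cr_d(\kk)$ to a curve is again a morphism with the same pointwise values; and over the normalization of $C$ (a smooth curve, so a DVR at the special point) one can rescale the representative so that it does not vanish at the special point, after which the two rational maps $C\times\p^d\dashrightarrow\p^d$ agree generically and hence on the special fibre. Nothing of this sort appears in your write-up; ``this is precisely where one exploits that $\rho$ is a morphism'' is not an argument.

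The second gap is an outright error: your justification of surjectivity of $p\colon Z(\kk)\to A(\kk)$. You claim that any representative $h\in H_n(\kk)$ of $\rho(a_0)$ gives a point $(a_0,h)\in Z(\kk)$, but $Z$ is defined as a \emph{closure}, not as the full incidence locus. If $g$ is reduced, $Z$ is irreducible of dimension $\dim A$ and dominates $A$, so every fibre of $p$ has dimension at most $\dim A-1$, whereas $\pi_n^{-1}(\rho(a_0))\cong\p\bigl((\kk[x_0,\dots,x_d]_{n-n_0})\bigr)$ with $n_0=\deg\rho(a_0)$ has dimension ${d+n-n_0 \choose d}-1$; already for $d=2$, $n=2$, $n_0=1$, $\dim A=2$ this is $2>1$, so most representatives of $\rho(a_0)$ do \emph{not} lie in $Z$. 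Worse, surjectivity on $\kk$-points can genuinely fail: over $\kk=\R$ an irreducible variety can have Euclidean-open ``sticks'' of real points inside a proper Zariski-closed subset, so $a_0$ need not be a Euclidean limit of real points of the good locus, and the geometric fibre $Z_{a_0}$, though non-empty, need not contain a $\kk$-point; then $p$ is not a quotient map onto $A(\kk)$ and the descent step collapses. One can instead take the incidence locus inside $A\times H_n$ (which is Zariski closed there, and visibly surjective on $\kk$-points), but then $p$ is no longer proper unless one proves that Euclidean limits of pairs $(a_m,h_m)$ stay in the incidence locus --- which is again the key claim of the previous paragraph. A correct implementation is pointwise: at points of the Euclidean closure of the good locus, use compactness of $W_n(\kk)$ to extract convergent subsequences of representatives and identify the limit by the curve argument; at the remaining points, restrict $\rho$ to the proper closed bad locus and induct on $\dim A$.
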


\section{A homomorphism of $\Cr_d(\kk)$ is determined by its restriction to $\aut(\p^n_\kk)$}

In this section, we show that for infinite fields a surjective Zariski or Euclidean continuous homomorphism of $\Cr_d(\kk)$ is determined by its restriction to $\aut(\p^d_\kk)$. The main ingredients for this result are continuous deformations of arbitrary Cremona transformations to linear maps. Consider the following example:

\begin{example}\label{ex:deformation}
Let $\kk$ be a field and pick $f\in\Cr_d(\kk)$ of degree $e$. 
With respect to affine coordinates on the chart $x_0\neq0$ we can write 
\[
f\colon(x_1,\dots,x_d)
\rat 
\left(F_1,\dots, F_d\right),
\]
where \[
F_i=\frac{P_{i0}+P_{i1}(x)+\cdots+P_{ie}(x)}{Q_{i0}+Q_{i1}(x)+\cdots+Q_{ie}(x)},
\]
for some homogeneous $P_{ij},Q_{ij}\in\kk[x_1,\dots,x_d]$ of degree $j$. 

For $t\in\kk\setminus\{0\}$, we define $\beta_t\in\aut(\p^d_\kk)$  by
\[
\beta_t\colon(x_1,\dots,x_n)\mapsto (tx_1,\dots,tx_d).
\]
The map $t\mapsto\beta_t^{-1} f\beta_t$ defines a $\kk$-morphism $\rho\colon\A_\kk^1\setminus\{0\}\to\Cr_d(\kk)$ whose image consists of conjugates of $f$ by linear maps. 
\end{example}

\begin{lemma}\label{lem:deformation_exists}
Let $\kk$ be an infinite field.
The $\kk$-morphism $\rho\colon\A_\kk^1\setminus\{0\}\to\Cr_d(\kk)$ from Example~\ref{ex:deformation} extends to a Zariski continuous map $\hat{\rho}\colon\A_\kk^1(\kk)\to\Cr_d(\kk)$ with $\hat{\rho}_0\in\aut(\p^n_\kk)$ if and only if $f$ is a local isomorphism at the point $p=[1:0:\cdots:0]$ and $f(p)=p$.  

The same statement holds if $\kk$ a local field and if we consider the Euclidean instead of the Zariski topology.
\end{lemma}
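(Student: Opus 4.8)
The plan is to make the deformation completely explicit and read off its limit as $t\to 0$. Substituting $x\mapsto tx$ in the expression for $f$ from Example~\ref{ex:deformation} and applying $\beta_t^{-1}$ (i.e. dividing the affine coordinates by $t$), the $i$-th coordinate of $\rho(t)=\beta_t^{-1}f\beta_t$ is
\[
(\beta_t^{-1}f\beta_t)_i=\frac{t^{-1}P_{i0}+P_{i1}(x)+tP_{i2}(x)+\cdots+t^{e-1}P_{ie}(x)}{Q_{i0}+tQ_{i1}(x)+\cdots+t^{e}Q_{ie}(x)},
\]
so the only obstruction to a finite value at $t=0$ is the polar term $t^{-1}P_{i0}$. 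Since $P_{i0}/Q_{i0}$ is the $i$-th affine coordinate of $f(p)$, the vanishing of all $P_{i0}$ (with $Q_{i0}\neq 0$, i.e. $p$ not an indeterminacy point) is exactly the condition $f(p)=p$; and when it holds the limit is the linear map $x\mapsto\bigl(P_{i1}(x)/Q_{i0}\bigr)_i$, whose matrix is the Jacobian $\bigl(\partial F_i/\partial x_j\bigr)(0)$. This linear map lies in $\aut(\p^d_\kk)=\PGL_{d+1}(\kk)$ if and only if its Jacobian is invertible, that is, if and only if $f$ is a local isomorphism at $p$. This computation is the backbone of both directions.

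For the \emph{if} direction I would upgrade this formal limit to a genuine continuous extension. Clearing denominators, $\rho$ is represented by a family of homogeneous systems whose coefficients are polynomials in $t$; dividing out the largest common power of $t$ yields a morphism $H\colon\A^1\to W_e(\kk)$ with $\psi_{H(t)}=\rho(t)$ for $t\neq 0$. Under the hypothesis $f(p)=p$ with $f$ \'etale at $p$, the computation above shows that $\psi_{H(0)}$ is the invertible linear map $\ell$, so $H(0)\in H_e(\kk)$ and $H$ takes values in $H_e(\kk)$ throughout. Hence $\hat\rho:=\pi_e\circ H\colon\A^1(\kk)\to\Cr_d(\kk)_{\leq e}\subset\Cr_d(\kk)$ is Zariski continuous, extends $\rho$, and satisfies $\hat\rho_0=\ell\in\aut(\p^d_\kk)$. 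When $\kk$ is a local field, $H$ is a $\kk$-morphism and Lemma~\ref{lem:morph_eucl_cont} gives that $\hat\rho$ is Euclidean continuous as well.

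For the \emph{only if} direction I would first argue that any continuous extension must take the value $\psi_{H(0)}$, and then that $\psi_{H(0)}\in\aut(\p^d_\kk)$ fails when the hypotheses do. Evaluating the family at a general point $q$ gives a rational map $\A^1\rat\p^d$, $t\mapsto\psi_{H(t)}(q)$, regular at $0$ with value $\psi_{H(0)}(q)$; comparing with $\rho(t)(q)$ for $t\neq 0$ and using uniqueness of limits in the separated variety $\p^d$ identifies $\hat\rho_0$ with $\psi_{H(0)}$ on a dense set, hence everywhere. It then remains to see that $\psi_{H(0)}\notin\aut(\p^d_\kk)$ when the hypotheses fail. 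When $f$ is defined at $p$ this is immediate from the formula: if $f(p)\neq p$ the polar term forces $\psi_{H(0)}$ to be the constant map onto $f(p)$, which is not birational; and if $f(p)=p$ with singular Jacobian, $\psi_{H(0)}$ is a non-invertible linear map, again outside $\aut(\p^d_\kk)$. In the Euclidean case uniqueness of the limit is immediate from the Hausdorff property, and in the Zariski case one uses that $\aut(\p^d_\kk)=\Cr_d(\kk)_{\leq 1}$ is closed together with Lemma~\ref{lem:bounded_degree}.

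The step I expect to be the main obstacle is the remaining case, where $p$ is an indeterminacy point of $f$ (so $Q_{i0}=0$). Here the naive polar analysis breaks down entirely: one cannot read the limit off the leading affine terms, and must instead track which power of $t$ survives in the homogeneous system $H$ after renormalization and then decide whether the resulting linear system defines an automorphism of $\p^d_\kk$ — a priori it may still do so, so this case genuinely requires understanding how the linear system of $f$ degenerates under the contraction $\beta_t$ toward the base point. Controlling this degeneration is where the bulk of the careful work lies; the secondary subtlety is the purely topological identification of $\hat\rho_0$ with the algebraic limit $\psi_{H(0)}$ in the non-Hausdorff Zariski setting, which I would handle via the evaluation maps into the separated space $\p^d$ and the closedness of the bounded-degree pieces $\Cr_d(\kk)_{\leq n}$.
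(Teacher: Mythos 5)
Your \emph{if} direction and your handling of the cases where $f$ is defined at $p$ follow the same route as the paper: the paper writes down the same expansion of $F_i^t$, observes that $f(p)=p$ together with local invertibility is equivalent to $P_{i0}=0$, $Q_{i0}\neq 0$ plus invertibility of the linear part, and identifies the limit with the derivative of $f$ at $p$; your renormalized family $H\colon\A^1\to W_e$ and the evaluation argument are a more careful spelling-out of this. The decisive point is the case you flagged as the main obstacle and left open, namely $p$ an indeterminacy point of $f$ (all $Q_{i0}=0$). The paper dispatches it in one sentence, asserting that if $F_i^t$ has a limit as a rational map for $t\to 0$ then $P_{i0}=0$ \emph{and} $Q_{i0}\neq0$. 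That assertion is incorrect: when sufficiently many low-order numerator terms vanish together with $Q_{i0}$, the powers of $t$ cancel between numerator and denominator and the limit exists anyway. So your proposal has a gap exactly where the paper's proof has an error.

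In fact the case you left open cannot be closed, because the \emph{only if} direction of the lemma fails there. Take $d=2$ and
\[
f=[x_0x_1+x_2^2:x_1^2:x_1x_2]\in\Cr_2(\kk),
\]
which is birational (its inverse is $[u_0u_1-u_2^2:u_1^2:u_1u_2]$) and whose unique indeterminacy point is $p=[1:0:0]$; in particular $f$ is not a local isomorphism at $p$ and $f(p)$ is undefined. A direct computation gives
\[
\beta_t^{-1}f\beta_t=[x_0x_1+tx_2^2:x_1^2:x_1x_2],\qquad
\rho_t(x,y)=\Bigl(\frac{x^2}{x+ty^2},\ \frac{xy}{x+ty^2}\Bigr)
\]
in affine coordinates; with the paper's notation one has $P_{i0}=P_{i1}=Q_{i0}=0$ and $Q_{i1}=x$, so the polar terms cancel. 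This family is a $\kk$-morphism $\A^1\to\Cr_2(\kk)$, hence Zariski continuous, and Euclidean continuous over a local field; at $t=0$ the three homogeneous components acquire the common factor $x_1$, and after cancelling it one gets $\hat\rho_0=\id\in\aut(\p^2_\kk)$. Thus $\rho$ extends continuously with $\hat\rho_0\in\aut(\p^2_\kk)$ although neither condition on the right-hand side of the equivalence holds. What the degeneration analysis genuinely proves is only the weaker dichotomy: either $f(p)=p$ and $f$ is a local isomorphism at $p$, or $p$ is an indeterminacy point of $f$. (Note that the proof of Proposition~\ref{pro:determined_by_aut} invokes precisely the false direction, applied to $\alpha^{-1}\varphi(f)^{-1}\alpha\varphi(f)$, so this gap propagates.) Your secondary worry, identifying $\hat\rho_0$ with $\psi_{H(0)}$ in the non-Hausdorff Zariski setting, is legitimate but minor and is handled the way you sketch; the substantive point is that your caution about the base-point case was exactly right, and sharper than the paper's own treatment of it.
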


\begin{proof}
With respect to affine coordinates $x=(x_1,\dots, x_d)$ on the chart $x_0\neq0$ we can write 
\[
\rho_t(x)=\beta_t^{-1} f\beta_t(x_1,\dots,x_d)=\left(F^t_1,\dots, F^t_d\right),
\]	
where 
\[
F_i^t=\frac{t^{-1}P_{i0}+P_{i1}(x)+tP_{i2}(x)+\cdots t^{e-1}P_{ie}(x)}{Q_{i0}+tQ_{i1}(x)+\cdots t^eQ_{ie}(x)}.
\]
	
First, suppose that $f$ fixes $p=[1:0:\cdots:0]$ and is a local isomorphism at $p$. Then $P_{i0}=0$ and $Q_{i0}\neq0$ for $i=1,\dots,d$ and hence $\rho_0$ corresponds to the derivative (the linear part) of $f$ at $[1:0:\cdots:0]$. Thus $\rho$ extends to a $\kk$-morphism $\hat{\rho}\colon\A^1\to\Cr_d$ with $\hat{\rho}_0\in\aut(\p^d_\kk)$, which is in particular Zariski continuous. If $\kk$ is a local field then $\hat{\rho}$ is Euclidean continuous by Lemma~\ref{lem:morph_eucl_cont}.

Now, suppose that $\rho$ extends according to our hypothesis and $\hat{\rho}_0\in\aut(\p^d_\kk)$. In particular, the limit of $F_i^t$ for $t\to 0$ is a rational map for all $i=1,\dots,d$, which implies that $P_{i0}=0$ and $Q_{i0}\neq0$.
Hence $f$ fixes $[1:0:\cdots:0]$, and $\hat{\rho}_0$ corresponds to the derivative of $f$ at $[1:0:\cdots:0]$, so $f$ is a local isomorphism at $p$.
\end{proof}

\begin{proposition}\label{pro:determined_by_aut}
Let $\kk$ be an infinite field and $\varphi\colon\Cr_d(\kk)\to\Cr_d(\kk)$ a group homomorphism which is Zariski continuous. If $\varphi|_{\aut(\p^d_\kk)}=\id_{\aut(\p^d_\kk)}$, then $\varphi=\id_{\Cr_d(\kk)}$. 

The same statement holds for any local field $\kk$ and $\varphi$ Euclidean continuous.
\end{proposition}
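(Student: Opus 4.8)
The plan is to use the deformation of Lemma~\ref{lem:deformation_exists} as a purely topological device that detects whether a Cremona transformation fixes $p=[1:0:\cdots:0]$ and is a local isomorphism there, and then to transport this detection through $\varphi$. The key step is the following transfer principle. Let $h\in\Cr_d(\kk)$ fix $p$ and be a local isomorphism at $p$. By the forward direction of Lemma~\ref{lem:deformation_exists}, the morphism $t\mapsto\beta_t^{-1}h\beta_t$ extends to a Zariski continuous map $\hat{\rho}\colon\A_\kk^1(\kk)\to\Cr_d(\kk)$ with $\hat{\rho}_0\in\aut(\p^d_\kk)$ (the linear part of $h$ at $p$). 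Since each $\beta_t\in\aut(\p^d_\kk)$ and $\varphi|_{\aut(\p^d_\kk)}=\id$, we have $\varphi(\beta_t)=\beta_t$, so the composite $\varphi\circ\hat{\rho}$ is again a Zariski continuous map $\A_\kk^1(\kk)\to\Cr_d(\kk)$ whose restriction to $t\neq 0$ is $t\mapsto\beta_t^{-1}\varphi(h)\beta_t$ and whose value at $0$ is $\varphi(\hat{\rho}_0)=\hat{\rho}_0\in\aut(\p^d_\kk)$. Thus the deformation associated with $\varphi(h)$ also extends continuously with value in $\aut(\p^d_\kk)$ at $t=0$, and applying the only-if direction of Lemma~\ref{lem:deformation_exists} to $\varphi(h)$ shows that $\varphi(h)$ fixes $p$ and is a local isomorphism at $p$. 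For a local field $\kk$ one argues identically, using that $\hat{\rho}$ and $\varphi$ are Euclidean continuous.

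Next I would upgrade this to an arbitrary $f\in\Cr_d(\kk)$ by conjugating with linear maps so as to move an arbitrary point to $p$. Let $U\subset\p^d_\kk$ be the open dense locus on which $f$ is a local isomorphism, and fix $q\in U(\kk)$. Since $\aut(\p^d_\kk)=\PGL_{d+1}(\kk)$ acts transitively on $\kk$-points, choose $a,b\in\aut(\p^d_\kk)$ with $a(p)=q$ and $b(f(q))=p$. Then $h:=bfa$ fixes $p$ and is a local isomorphism at $p$, so the transfer principle applies and $\varphi(h)$ fixes $p$ and is a local isomorphism at $p$. As $\varphi$ is a homomorphism fixing $\aut(\p^d_\kk)$ pointwise, $\varphi(h)=b\,\varphi(f)\,a$; the statement that $\varphi(h)$ fixes $p$ then reads $b(\varphi(f)(q))=p$, whence $\varphi(f)(q)=b^{-1}(p)=f(q)$, while the local isomorphism statement guarantees that $\varphi(f)$ is genuinely defined at $q$.

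Finally, since $\kk$ is infinite (and local fields are infinite), $U(\kk)$ is Zariski dense in $\p^d_\kk$, so $f$ and $\varphi(f)$ are rational maps agreeing on a Zariski dense set of $\kk$-points and are therefore equal; as $f$ was arbitrary, $\varphi=\id_{\Cr_d(\kk)}$. I expect the one delicate point to be the transfer principle: one must invoke Lemma~\ref{lem:deformation_exists} only through the \emph{existence} of a continuous extension taking a value in $\aut(\p^d_\kk)$ at $t=0$, which is all the only-if direction requires, so that no uniqueness of the extension—and in particular no Hausdorffness of the Zariski topology—is needed. Matching the linear parts would additionally follow in the Euclidean (Hausdorff) setting, but this is not required: detecting that $\varphi(f)$ fixes $p$ already forces pointwise agreement after conjugation, and that suffices.
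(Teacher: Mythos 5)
Your proposal is correct, and its core mechanism is exactly the paper's: run the deformation $t\mapsto\beta_t^{-1}h\beta_t$ of Lemma~\ref{lem:deformation_exists} through $\varphi$, using $\varphi(\beta_t)=\beta_t$ and $\varphi(\hat{\rho}_0)=\hat{\rho}_0$, and invoke the converse direction of the lemma (which, as you rightly stress, only needs the \emph{existence} of a continuous extension with value in $\aut(\p^d_\kk)$ at $t=0$, so non-Hausdorffness of the Zariski topology is harmless). Where you genuinely diverge is in the choice of auxiliary element and the harvesting of the conclusion. The paper applies the transfer to the commutator $h=\alpha^{-1}f^{-1}\alpha f$, where $\alpha\in\aut(\p^d_\kk)$ is chosen to have exactly two fixed $\kk$-points $p$ and $q=f(p)$; this forces the a priori assumption that $\varphi(f)$ is a local isomorphism at $p$ (needed to unravel the composite $\alpha^{-1}\varphi(f)^{-1}\alpha\varphi(f)$ pointwise), yields only the dichotomy $\varphi(f)(p)\in\{p,f(p)\}$, hence $\varphi(f)=\id$ or $\varphi(f)=f$ after a density argument, and then requires a final group-theoretic bootstrap (comparing $\varphi(\gamma f)$ with $\gamma$ for nontrivial $\gamma\in\aut(\p^d_\kk)$) to exclude $\varphi(f)=\id$. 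Your choice $h=bfa$, with $a,b$ linear and $a(p)=q$, $b(f(q))=p$, is cleaner on all three counts: because pre- and post-composition by automorphisms translates indeterminacy and local-isomorphism loci transparently, the conclusion that $b\varphi(f)a$ is a local isomorphism at $p$ fixing $p$ \emph{gives} that $\varphi(f)$ is defined at $q$ (no a priori assumption on $\varphi(f)$), pins down $\varphi(f)(q)=f(q)$ with no dichotomy, and makes the final bootstrapping step unnecessary, since agreement on the dense set $U(\kk)$ immediately yields $\varphi(f)=f$. Both arguments are valid; yours trades the paper's fixed-point trick for a more direct one and shortens the endgame.
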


\begin{proof}
Let $f\in\Cr_d(\kk)$ and $p\in\p^d_{\kk}(\kk)$ any point such that both $f$ and $\varphi(f)$ are local isomorphisms at $p$. We will prove that  $\varphi(f)(p)=f(p)$ or $\varphi(f)(p)=p$, which implies that $\varphi(f)=f$ or $\varphi(\id)$, since $\kk$ is infinite. From this we will then deduce the claim.

We denote $q:=f(p)\in\p^d_{\kk}(\kk)$. There exists an automorphism $\alpha\in\aut(\p^d_\kk)$ whose only fixed points in $\p^d_{\kk}(\kk)$ are $p$ and $q$. 
Let us observe that the birational transformation $\alpha^{-1}f^{-1}\alpha f$ fixes $p$ and is a local isomorphism at $p$. 
Denote by $\rho\colon\A_\kk^1\setminus\{0\}\to\Cr_d(\kk)$ the morphism defined in Example~\ref{ex:deformation} with respect to the map $\alpha^{-1}f^{-1}\alpha f$, i.e. $\rho_t=\beta_t^{-1}( \alpha^{-1}f^{-1}\alpha f)\beta_t$. 
By Lemma~\ref{lem:deformation_exists}, $\rho$ extends to a morphism $\hat{\rho}\colon\A^1\to\Cr_d(\kk)$ and $\hat{\rho}_0$ is the derivative of $(\alpha^{-1}f^{-1}\alpha f)$ at $p$. 
Lemma~\ref{lem:morph_eucl_cont} yields that the corresponding map $\A_\kk^1(\kk)\to\Cr_d(\kk)$ is also Euclidean continuous. 

Since $\varphi$ is Zariski (respectively Euclidean) continuous, the map $\varphi\circ\hat{\rho}\colon\A^1(\kk)\to\Cr_d(\kk)$ is also Zariski (respectively Euclidean) continuous. By hypothesis, the restriction of $\varphi$ to $\aut(\p^d_\kk)$ is the identity, and so
\begin{align*}
&\varphi(\rho_t)=\beta_t^{-1} \varphi(\alpha^{-1}f^{-1}\alpha f)\beta_t=\beta_t^{-1} \alpha^{-1}\varphi(f)^{-1}\alpha\varphi(f)\beta_t,\quad t\neq0\\
&\varphi(\hat{\rho}_0)=\hat{\rho}_0 \in\aut(\p^d_\kk)
\end{align*}
Lemma~\ref{lem:deformation_exists} implies that the map $(\alpha^{-1}\varphi(f)^{-1}\alpha\varphi(f))$ fixes $p$ and is a local isomorphism at $p$. In particular, since $\varphi(f)$ is a local isomorphism at $p$, we have
\[\alpha(\varphi(f)(p))=\varphi(f)(\alpha(p))=\varphi(f)(p).\]
By hypothesis, $p$ and $q=f(p)$ are the only fixed points of $\alpha$, hence 
\[\varphi(f)(p)=p\quad\text{or}\quad\varphi(f)(p)=f(p).\]
This holds for any $p$ in an open dense subset of $\p^d_{\kk}(\kk)$. 
As $\kk$ is infinite, $\varphi(f)$ coincides thus with $\id_{\p^d_\kk}$ or with $f$. This holds for any $f\in\Cr_d(\kk)$, so in particular for $\gamma f$, for any non-trivial $\gamma\in\aut(\p^d_\kk)$.
Suppose that $\varphi(f)=\id$. Then $\id\neq\gamma=\varphi(\gamma)=\varphi(\gamma f)$. Since we have by  the same argument again that either $\varphi(\gamma f)=\gamma f$ or $\varphi(\gamma f)=\id$, we obtain that  $\varphi(\gamma f)=\gamma f$, which implies $f=\id$.  
It follows that $\varphi(f)=f$ for any $f\in\Cr_d(\kk)$.
\end{proof}

We can repeat the proof of Proposition~\ref{pro:determined_by_aut} word by word by considering $\aut(\A^d_\kk)$ instead of $\Cr_d(\kk)$ and its subgroup $\aff_d(\kk)$ instead of $\PGL_{d+1}(\kk)$. This way, we obtain the following statement:

\begin{proposition}\label{pro:determined_by_aff}
Let $\kk$ be an infinite field and $\varphi\colon\aut(\A^d_\kk)\to\aut(\A^d_\kk)$ a surjective homomorphism which is  Zariski continuous. If $\varphi|_{\aff_d(\kk)}=\id_{\aff_d(\kk)}$, then $\varphi=\id_{\aut(\A^d_\kk)}$. 
\end{proposition}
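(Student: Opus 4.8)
The plan is to follow the proof of Proposition~\ref{pro:determined_by_aut} line by line, replacing $\Cr_d(\kk)$ by $\aut(\A^d_\kk)$ and $\aut(\p^d_\kk)\cong\PGL_{d+1}(\kk)$ by its affine subgroup $\aff_d(\kk)$. Two features make the translation smoother than in the Cremona case: the scalings $\beta_t\colon x\mapsto tx$ of Example~\ref{ex:deformation} are affine, hence lie in $\aff_d(\kk)$ and are fixed by $\varphi$, and (after moving the relevant point to the origin by an affine change of coordinates, which is again fixed by $\varphi$) Lemma~\ref{lem:deformation_exists} applies verbatim; moreover every element of $\aut(\A^d_\kk)$ is biregular, so the local-isomorphism hypotheses of the Cremona argument are automatic and may be dropped. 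Thus, for $f\in\aut(\A^d_\kk)$, writing $g=\varphi(f)$ and $q=f(p)$ for a point $p$ with $f(p)\neq p$, and choosing $\alpha\in\aff_d(\kk)$ fixing both $p$ and $q$, the commutator $\alpha^{-1}f^{-1}\alpha f$ fixes $p$; deforming it by the $\beta_t$ centred at $p$, applying $\varphi$, and invoking Lemma~\ref{lem:deformation_exists} exactly as before, I conclude that $\alpha$ fixes $g(p)=\varphi(f)(p)$.

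The one genuine point of departure — and the step I expect to be the main obstacle — is that no affine transformation can play the role of the projective $\alpha$ whose \emph{only} fixed points are $p$ and $q$. Indeed, the fixed locus in $\A^d_\kk$ of an affine map is an affine subspace, so any $\alpha\in\aff_d(\kk)$ fixing the two distinct points $p,q$ necessarily fixes the whole line $\overline{p,q}$; one can realise exactly this line as the fixed locus, but never less. Running the previous paragraph over all such $\alpha$ therefore yields only the weaker conclusion $\varphi(f)(p)\in\overline{p,f(p)}$, and the clean dichotomy $\varphi(f)(p)\in\{p,f(p)\}$ of Proposition~\ref{pro:determined_by_aut} is lost.

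To recover the conclusion I would break the symmetry between $p$ and $q$ by using translations, which $\aff_d(\kk)$ supplies. Applying the argument above not to $f$ but to $hf$, where $h\in\aff_d(\kk)$ is the translation by a vector $w$, and using $\varphi(hf)=h\,\varphi(f)=hg$, I obtain $h(g(p))\in\overline{p,h(f(p))}$. Writing $g(p)=p+s\,(f(p)-p)$ with $s\in\kk$ (legitimate by the second paragraph) and using that $h$ is affine, so $h(g(p))=h(p)+s\,(h(f(p))-h(p))$, the collinearity of $p$, $h(f(p))$, $h(g(p))$ reduces after a one-line computation to $(1-s)\bigl((f(p)-p)\wedge w\bigr)=0$. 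Since $d\geq 2$ and $f(p)\neq p$, I may choose $w$ not parallel to $f(p)-p$, which forces $s=1$, that is $\varphi(f)(p)=f(p)$. As this holds for $p$ in a dense subset and $\kk$ is infinite, the two biregular maps $\varphi(f)$ and $f$ agree, so $\varphi(f)=f$; the case $f=\id$ is trivial. Because the translation pins $\varphi(f)(p)$ to $f(p)$ rather than to $p$, this affine version does not even require the final $\gamma f$ bookkeeping used to resolve the dichotomy in Proposition~\ref{pro:determined_by_aut}, and we conclude $\varphi=\id_{\aut(\A^d_\kk)}$.
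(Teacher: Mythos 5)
Your proof is correct, and your diagnosis of where the literal translation breaks is exactly right. The paper disposes of this proposition in one line, asserting that the proof of Proposition~\ref{pro:determined_by_aut} can be repeated ``word by word'' with $\aff_d(\kk)$ in place of $\PGL_{d+1}(\kk)$; but that proof hinges on choosing $\alpha$ whose \emph{only} fixed points are $p$ and $q=f(p)$, and, as you observe, no such $\alpha$ exists in $\aff_d(\kk)$: the fixed locus of an affine map is an affine subspace, so fixing $p\neq q$ forces fixing the whole line $\overline{p,q}$. The literal repetition therefore only yields $\varphi(f)(p)\in\overline{p,f(p)}$, which by itself does not pin down $\varphi(f)$ (for $f$ a translation, many automorphisms $g\neq f,\id$ satisfy $g(p)\in\overline{p,f(p)}$ for all $p$, e.g.\ shears along the direction of translation). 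Your translation trick --- applying the collinearity statement to $hf$ with $h$ the translation by some $w$ not parallel to $f(p)-p$, and using $\varphi(hf)=h\varphi(f)$ --- correctly removes the ambiguity: the identity $(1-s)\bigl((f(p)-p)\wedge w\bigr)=0$ forces $s=1$, hence $\varphi(f)(p)=f(p)$ on a Zariski-dense set of $\kk$-points, hence $\varphi(f)=f$ since both are morphisms. As a bonus, pinning $\varphi(f)(p)$ to $f(p)$ directly makes the dichotomy, the final $\gamma f$ bookkeeping, and even the surjectivity hypothesis unnecessary. Two small points you should record explicitly: an $\alpha\in\aff_d(\kk)$ with fixed locus exactly $\overline{p,q}$ does exist (after an affine change of coordinates sending $p,q$ to $0,e_1$, take $\operatorname{diag}(1,\lambda,\dots,\lambda)$ with $\lambda\in\kk\setminus\{0,1\}$, which uses that $\kk$ is infinite); and for $d=1$, where your choice of $w$ is impossible, the statement is vacuously true because $\aut(\A^1_\kk)=\aff_1(\kk)$.
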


\section{Proofs of the main theorems}

Group automorphisms of classical groups are well-understood. In \cite{MR0310083} J. Dieu\-donn\'e gives a complete classification in the case of fields, 
and we need the classification of group automorphisms of $\PGL_{d+1}(\kk)$.

\begin{theorem}[{\cite[IV.\S1.I--III, p.85--89 and IV.\S6, p.98]{MR0310083}}]\label{thm:dieudonne}
Let $d\geq1$ and $\kk$ a field. 
Let $G$ be $\PGL_{d+1}(\kk)$ or $\GL_{d+1}(\kk)$.
For any group automorphism $\varphi\colon G\to G$ there exists an element $h\in G$ and a field automorphism $\alpha$ of $\kk$ such that $\varphi$ is of the form
\[\varphi(g)=h({}^{\alpha}\!g^{\vee})h^{-1}\quad\text{or}\quad \varphi(g)= h({}^{\alpha}\!g)h^{-1}\]
where $g^{\vee}={}^t(g^{-1})$ is the inverse of the transpose of $g$.
\end{theorem}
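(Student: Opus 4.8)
The plan is to reduce the statement to the fundamental theorem of projective geometry, which for $d\geq 2$ identifies the group of collineations of $\p^d(\kk)$ with $\PGL_{d+1}(\kk)\rtimes\Gal(\kk)$ and describes every correlation as the composite of such a collineation with the standard duality $g\mapsto g^{\vee}$. The whole problem is thus to extract, in a purely group-theoretic way, a collineation or a correlation of $\p^d(\kk)$ from the abstract automorphism $\varphi$. The case $d=1$ (the group $\PGL_2(\kk)$, where the fundamental theorem is vacuous and points coincide with hyperplanes, so the two forms in the statement merge) has to be handled by separate direct arguments, and I set it aside.

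First I would recognize the set $\mathcal{T}$ of transvections from the group structure alone. Transvections are the nontrivial elements $u$ with $\rk(u-\id)=1$ and $(u-\id)^{2}=0$; they form a single conjugacy class that can be singled out group-theoretically (for instance as the generators of the minimal nontrivial root subgroups, or via the isomorphism type of their centralizers), so that $\varphi(\mathcal{T})=\mathcal{T}$. For $G=\GL_{d+1}(\kk)$ one first uses that $\varphi$ preserves the characteristic subgroup $\zenter(G)\cong\kk^{\ast}$, records the induced automorphism of $\kk^{\ast}$ together with the behaviour of the determinant, and passes to the quotient $\PGL_{d+1}(\kk)$; these data are reinstated at the very end.

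For a point $p\in\p^d(\kk)$ the transvections with center $p$ form, with $\id$, an abelian group $T_p\cong(\kk^{d},+)$, and for a hyperplane $H$ those with axis $H$ form a group $T_H$; these are exactly the maximal abelian subgroups of $G$ consisting of transvections, and they fall into the two $G$-conjugacy classes $\{T_p\}$ and $\{T_H\}$ (distinct precisely because $d\geq 2$). Since $\varphi$ preserves $\mathcal{T}$, it permutes these maximal abelian transvection subgroups and hence either fixes both classes or interchanges them, yielding a bijection $\sigma$ of the points, respectively a bijection of points onto hyperplanes. The key geometric input is that incidence is visible in the group: three points $p,p',p''$ are collinear if and only if $T_{p''}\subseteq\langle T_p,T_{p'}\rangle$, a relation that $\varphi$ preserves. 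Therefore $\sigma$ is a collineation in the first case and a correlation in the second.

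By the fundamental theorem of projective geometry there are $h\in\PGL_{d+1}(\kk)$ and a field automorphism $\alpha$ so that $\sigma$ is realized by $g\mapsto h({}^{\alpha}\! g)h^{-1}$ in the first case and by $g\mapsto h({}^{\alpha}\! g^{\vee})h^{-1}$ in the second. Composing $\varphi$ with the inverse of this candidate automorphism leaves an automorphism $\varphi'$ inducing the identity collineation on $\p^d(\kk)$, i.e. fixing every $T_p$ and every $T_H$ setwise. It remains to prove that such a $\varphi'$ is the identity: fixing the center- and axis-subgroups forces $\varphi'$ to preserve every root subgroup, and the Chevalley commutator relations among the root subgroups force the induced additive automorphisms to be governed by a single field automorphism $\beta$ of $\kk$, so that $\varphi'$ agrees with $g\mapsto{}^{\beta}\! g$ up to an inner automorphism; since $\varphi'$ induces the identity on $\p^d(\kk)$, both $\beta$ and the inner part are trivial, whence $\varphi'=\id$. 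As $\SL_{d+1}(\kk)$ is generated by transvections this pins down $\varphi'$ on $\PSL_{d+1}(\kk)$, and reinstating the recorded action on the center and the determinant yields the statement for $\GL_{d+1}(\kk)$ and $\PGL_{d+1}(\kk)$. The main obstacle is the intrinsic, matrix-free recognition of the geometry, namely of the transvection set $\mathcal{T}$, the center- and axis-subgroups $T_p,T_H$, and the collinearity criterion: this is exactly where one rules out exotic automorphisms and where Dieudonn\'e's case analysis is concentrated, whereas the appeal to the fundamental theorem and the final rigidity computation are comparatively formal.
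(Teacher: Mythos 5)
The paper itself offers no proof of this statement---it is quoted directly from Dieudonn\'e's book---so your attempt can only be measured against the classical argument given there, whose broad outline (reconstructing the projective space from maximal abelian transvection subgroups, invoking the fundamental theorem of projective geometry, then rigidity via the commutator relations) you reproduce correctly. The genuine gap is the step you explicitly postpone: the invariance $\varphi(\mathcal{T})=\mathcal{T}$ of the set of transvections under an \emph{abstract} automorphism. Neither of the justifications you offer works as stated: ``generators of the minimal nontrivial root subgroups'' is circular, since root subgroups are defined via the linear structure, which an abstract automorphism is not yet known to respect; and ``the isomorphism type of their centralizers'' is precisely the assertion that has to be proved, not a proof. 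Concretely, one must rule out that $\varphi$ sends transvections to elements of other classes with abstractly similar centralizers---for instance the dilatations $\id+v\otimes\phi$ with $\phi(v)\neq 0$, whose centralizers are likewise stabilizers of a point--hyperplane pair, or, in characteristic $2$, the involutions $u$ with $\rk(u-\id)\geq 2$ (in characteristic $2$ every involution is unipotent, so transvections are not even distinguished among involutions by their order). This discrimination is exactly where Dieudonn\'e's case analysis lives; a proof that assumes it has assumed the theorem. The case $d=1$, which the statement includes, is also set aside entirely.

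A second, independent problem is that your reduction of $\GL_{d+1}(\kk)$ to $\PGL_{d+1}(\kk)$ would fail: an automorphism of $\GL_{d+1}(\kk)$ is \emph{not} determined by the induced automorphism of $\PGL_{d+1}(\kk)$ together with its action on the center. For any homomorphism $\rho\colon\kk^{*}\to\kk^{*}$ such that $g\mapsto\rho(\det g)\,g$ is bijective, this ``radial'' map is an automorphism of $\GL_{d+1}(\kk)$ inducing the identity on the center, on $\SL_{d+1}(\kk)$ and on $\PGL_{d+1}(\kk)$. Such automorphisms exist: for $\kk=\R$ and $d+1$ even, $g\mapsto\operatorname{sign}(\det g)\cdot g$ is one, and it is of neither stated normal form (since it is the identity on $\SL_{d+1}(\R)$ and the only field automorphism of $\R$ is trivial, the first form would force it to be the identity, while the second would force $\varphi(g)$ to have the eigenvalues of $g^{-1}$; take $g=\operatorname{diag}(2,1,\dots,1)$). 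Accordingly, Dieudonn\'e's theorem for $\GL_{n}$ carries an extra radial character $\chi(g)$ in front of both normal forms, and the clean dichotomy holds only for $\PGL$ (and $\PSL$). So ``reinstating the recorded action on the center and the determinant'' cannot be carried out as you describe; as written, your argument would prove a statement that is false for $\GL_{2m}(\R)$, a discrepancy worth flagging in the paper's own formulation as well.
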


Not all automorphisms of $\PGL_{d+1}(\kk)$ can be extended to an automorphism of the Cremona group:

\begin{lemma}\cite[Corollary A.12]{longversion} 
\label{lem:extension}
For any field $\kk$, 
	the automorphism of $\PGL_{d+1}(\kk)$ given by $g\mapsto {}^\alpha\!g^\vee$ does not extend to an automorphism of $\Cr_d(\kk)$. 
\end{lemma}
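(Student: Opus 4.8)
The plan is to argue by contradiction, exploiting the single fact that an \emph{abstract} group automorphism carries centralisers to centralisers. The point is to locate two abelian unipotent subgroups of $\PGL_{d+1}(\kk)=\aut(\p^d_\kk)$ that are interchanged by $g\mapsto{}^\alpha\!g^\vee$ but whose centralisers inside $\Cr_d(\kk)$ have genuinely different size. Note that this argument uses no continuity whatsoever: it applies to an arbitrary group automorphism extending $g\mapsto{}^\alpha\!g^\vee$.

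So suppose $\Phi\colon\Cr_d(\kk)\to\Cr_d(\kk)$ is a group automorphism with $\Phi(g)={}^\alpha\!g^\vee$ for all $g\in\PGL_{d+1}(\kk)$. In affine coordinates $x=(x_1,\dots,x_d)$ on the chart $x_0\neq0$, consider the two abelian unipotent subgroups
\[
U=\{x\mapsto x+a : a\in\kk^d\},\qquad V=\Big\{x\mapsto \tfrac{x}{1-\langle a,x\rangle} : a\in\kk^d\Big\},
\]
where $\langle a,x\rangle=\sum_i a_ix_i$; geometrically $U$ is the unipotent radical of the stabiliser of the hyperplane at infinity and $V$ that of the stabiliser of the origin $[1:0:\cdots:0]$. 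A direct matrix computation gives ${}^\alpha(x\mapsto x+a)^\vee=(x\mapsto \tfrac{x}{1-\langle \alpha(a),x\rangle})$, so $g\mapsto{}^\alpha\!g^\vee$ sends $U$ onto $V$ (the field automorphism $\alpha$ merely permutes the parameters within each group). Hence $\Phi(U)=V$.

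First I would compute $\cent_{\Cr_d(\kk)}(U)=U$: if $f=(f_1,\dots,f_d)$ commutes with every translation, then $f_i(x+a)=f_i(x)+a_i$ for all $a\in\kk^d$, so each $f_i(x)-x_i$ is a translation-invariant rational function and hence constant (as $\kk$ is infinite), giving $f\in U$. Next I would show $\cent_{\Cr_d(\kk)}(V)\supsetneq V$: for every \emph{homogeneous rational function} $T$ of degree $1$ the birational map $f_T\colon x\mapsto \tfrac{x}{1+T(x)}$ commutes with all of $V$ (using $T(\tfrac{x}{1-\langle a,x\rangle})=\tfrac{T(x)}{1-\langle a,x\rangle}$) and satisfies $f_{T_1}\circ f_{T_2}=f_{T_1+T_2}$. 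Taking $T=x_1^2/x_2$, which is possible because $d\geq 2$, produces a genuinely non-linear $f_T$, so $\cent_{\Cr_d(\kk)}(V)$ strictly contains $V\subset\PGL_{d+1}(\kk)$.

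These two computations are incompatible with $\Phi(U)=V$: since any group automorphism satisfies $\Phi(\cent(H))=\cent(\Phi(H))$, we get $\Phi(\cent_{\Cr_d(\kk)}(U))=\cent_{\Cr_d(\kk)}(V)$, and as $\cent_{\Cr_d(\kk)}(U)=U$ the left-hand side is $\Phi(U)=V$, forcing $\cent_{\Cr_d(\kk)}(V)=V$ against the previous paragraph. The conceptual heart — and the step I expect to require the most care — is the asymmetry of the two centraliser computations: the translations in $U$ share no fixed point in $\A^d$, which keeps $\cent(U)$ small, whereas every element of $V$ fixes the origin and each line through it, forcing a commuting map to preserve all those lines and thus admitting the large family $\{f_T\}$. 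This asymmetry also explains the hypothesis $d\geq2$ (for $d=1$ one has $\Cr_1(\kk)=\PGL_2(\kk)$ and the two centralisers coincide, consistent with $g\mapsto g^\vee$ being inner), while for finite base fields the equality $\cent(U)=U$ fails and one must argue as in the cited reference.
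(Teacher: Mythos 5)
Your argument is correct for infinite fields, and it is a genuinely self-contained route: the paper itself gives no proof of this lemma, deferring entirely to \cite[Corollary A.12]{longversion}, whereas you give a short, purely group-theoretic centraliser argument that needs no continuity and no input beyond elementary computation. I checked the load-bearing steps: with $T_a\colon x\mapsto x+a$ one indeed has $T_a^\vee\colon x\mapsto x/(1-\langle a,x\rangle)$, so $g\mapsto{}^\alpha\!g^\vee$ carries $U$ onto $V$; over an infinite $\kk$ the identity $f(x+a)=f(x)+a$ for all $a\in\kk^d$ propagates (by Zariski density of $\kk^d$) to an identity in $\kk(a,x)$, forcing $f-\id$ constant and hence $\cent_{\Cr_d(\kk)}(U)=U$; and for $T$ rational homogeneous of degree $1$ the maps $f_T\colon x\mapsto x/(1+T(x))$ satisfy $f_{T_1}f_{T_2}=f_{T_1+T_2}$ (so $f_T$ is birational with inverse $f_{-T}$) and commute with $V=\{f_{-\langle a,\cdot\rangle}\}$, while $T=x_1^2/x_2$ gives a degree-$2$ element of $\cent_{\Cr_d(\kk)}(V)\setminus\PGL_{d+1}(\kk)$, so the equality $\Phi(\cent(U))=\cent(\Phi(U))$ yields the contradiction $V=\cent_{\Cr_d(\kk)}(V)$. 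Two caveats, both of which you already flag honestly: your proof needs $d\geq 2$ (correctly so -- for $d=1$ the automorphism is inner and does extend, so the lemma's statement tacitly assumes $d\geq2$), and it needs $\kk$ infinite, since over $\F_q$ translation-invariant functions such as $x_2^q-x_2$ make $\cent_{\Cr_d(\F_q)}(U)$ strictly larger than $U$ and the asymmetry you exploit collapses. So as a proof of the lemma verbatim (``for any field $\kk$'') it is incomplete for finite fields, where one must still fall back on the cited reference; but for every use the paper makes of the lemma (fields of characteristic $0$ in Theorems~\ref{main:cremona} and~\ref{main:euclidean}, and $\R$ in Proposition~\ref{main:birdiff}) your argument suffices and could replace the external citation. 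What your approach buys is transparency and independence from \cite{longversion}; what it gives up is exactly the finite-field case.
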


\begin{theorem}[{\cite[Theorem A, Corollary 8.5]{cantat2014algebraic}}]\label{thm:cantatxie}
	Let $d\geq 1$ and $\Gamma\subset\SL_{d+1}(\Z)$ a finite index subgroup. Let $X_{\C}$ be an irreducible complex quasi-projective variety of dimension $\dim(X)=n$. 
	\begin{enumerate}
		\item If there is an injective group homomorphism  $\varphi\colon\Gamma\to \aut(X_{\C})$, then \hbox{$n\geq d$.} \\
		If $n=d$ then there is an isomorphism $f\colon X_{\C}\to \p^{d}_{\C}$ such that $f\varphi(\Gamma) f^{-1}\subset\aut(\p^d_{\C})$.
	
		\item If there is an injective group homomorphism  $\psi\colon\Gamma\to \Bir(X_{\C})$, then $n\geq d$. \\
		If $n=d$ then there is a birational transformation $f\colon X_{\C}\dashrightarrow \p^d_{\C}$ such that $f\varphi(\Gamma) f^{-1}\subset\aut(\p^d_{\C})$.
	\end{enumerate}
\end{theorem}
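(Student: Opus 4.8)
The plan is to prove the statement for $d\geq 2$ by combining the theory of dynamical degrees with the rigidity of higher rank lattices, and to dispose of $d=1$ separately. Throughout I view $\Gamma\subset\SL_{d+1}(\Z)$ as a finitely generated group with a word metric $|\cdot|$, and I exploit two features of a finite index subgroup: for $d\geq 2$ it is an irreducible lattice of real rank $d\geq 2$ in $\SL_{d+1}(\R)$, so that Margulis superrigidity, property (T) and the normal subgroup theorem are available; and it is generated by elementary unipotents $e_{ij}$ which, through the Steinberg relation $[e_{ik}(a),e_{kj}(b)]=e_{ij}(ab)$ (valid because $d+1\geq 3$ supplies a third index $k$), are exponentially distorted: $|e_{ij}(m)|=O(\log m)$.

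First I would bound the dynamical degrees. Fix an injective homomorphism $\psi\colon\Gamma\to\Bir(X_\C)$ and a finite symmetric generating set whose images have degree at most $D$. Submultiplicativity of degree gives $\deg(\psi(g))\leq D^{|g|}$ for every $g$, so applying this to $g=e_{ij}(m)$ and using distortion yields $\deg(\psi(e_{ij})^m)\leq D^{O(\log m)}=m^{O(1)}$. Hence the powers of each $\psi(e_{ij})$ have at most polynomially growing degree, so $\lambda_1(\psi(e_{ij}))=1$; since every higher dynamical degree is controlled by a power of the degree sequence, all dynamical degrees of each generator equal $1$. Thus $\psi(\Gamma)$ contains no loxodromic element.

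The core step, and the one I expect to be the main obstacle, is regularization: producing a smooth projective variety $Y$ birational to $X$ on which a finite index subgroup of $\Gamma$ acts by automorphisms. This is exactly the content of the $p$-adic method of Cantat--Xie. The idea is to spread $\psi(\Gamma)$ out over a finitely generated subring of $\C$ and complete at a well-chosen prime, so that the distorted unipotents, having all dynamical degrees equal to $1$, act with tame, non-expanding behaviour on a non-archimedean analytic space; one uses this to locate an invariant structure and to tame the indeterminacy loci, upgrading the birational action to a biregular one. Property (T) and the normal subgroup theorem are what rule out the wild, infinitely generated obstructions to such a regularization. In part (1) the action is already by automorphisms, so only an equivariant projective compactification of $X_\C$ is needed here, which is the easier subcase.

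Once $\Gamma$ acts by automorphisms of a projective $Y$, I would analyse $\aut(Y)$ through its action on the N\'eron--Severi group. The representation $\Gamma\to\OG(\NS(Y))$ preserves the intersection form, of signature $(1,\rho-1)$ by the Hodge index theorem, together with the ample cone; an infinite image would give a faithful action of the higher rank lattice $\Gamma$ on the rank one space $\mathbb{H}^{\rho-1}$, which superrigidity forbids. Hence the image is finite, the finite index subgroup of $\Gamma$ acting trivially on $\NS(Y)$ fixes an ample class, and the corresponding automorphisms form an algebraic group with $\aut^0(Y)$ as identity component, so after passing to finite index we may assume $\Gamma\subset\aut^0(Y)$. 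By Borel density and superrigidity its Zariski closure is semisimple and forces an algebraic action of $\PGL_{d+1}$ on $Y$. Finally I would invoke the minimality of the standard representation: the smallest dimension of a variety carrying a faithful algebraic $\PGL_{d+1}$-action is $d$, attained only by $\p^d$ with its standard action. This gives $n=\dim Y\geq d$, and in the equality case identifies $Y$ equivariantly with $\p^d_\C$, producing the conjugating map $f$ with $f\psi(\Gamma)f^{-1}\subset\aut(\p^d_\C)$ in both (1) and (2). For $d=1$ a finite index subgroup of $\SL_2(\Z)$ contains a nonabelian free group, whereas the automorphism group of a curve of genus $\geq 1$ is virtually abelian; so any faithful action on a curve forces it to be $\p^1$, settling both parts elementarily.
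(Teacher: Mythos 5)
You should first note that the paper contains no proof of this statement at all: it is imported verbatim from Cantat--Xie \cite{cantat2014algebraic} and used as a black box, so the only ``proof'' the paper offers is the citation. Read as an independent proof, your proposal has a genuine gap at exactly the decisive step. Your first step is fine: the Lubotzky--Mozes--Raghunathan distortion estimate $|e_{ij}(m)|=O(\log m)$ (valid since $d+1\geq 3$) together with submultiplicativity of degrees gives polynomial degree growth for the images of the elementary unipotents, hence first dynamical degree $1$, and log-concavity of the dynamical degrees then makes them all equal to $1$. But the next step --- upgrading the birational action to a regular action on some model --- is justified only by the phrase ``this is exactly the content of the $p$-adic method of Cantat--Xie.'' That is circular: the $p$-adic regularization (spreading $\psi(\Gamma)$ out over a finitely generated ring, choosing a prime of good reduction, producing an invariant $p$-adic polydisk on which the distorted unipotents act by analytic automorphisms, and propagating this to all of $\Gamma$ through the Steinberg relations) is precisely the new and hard content of the theorem you are asked to prove. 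Your sketch supplies no argument for it, and the assertion that property (T) and the normal subgroup theorem ``rule out the wild, infinitely generated obstructions'' is not a mechanism; neither property is what makes the regularization work.

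There are also secondary inaccuracies in the endgame. The N\'eron--Severi step is stated for surfaces only: the intersection form gives a signature $(1,\rho-1)$ structure on $\NS(Y)$ only when $\dim Y=2$; in higher dimension one must argue with the nef cone or a suitable quadratic form on $H^{1,1}$, so the appeal to an action on $\mathbb{H}^{\rho-1}$ does not literally apply, and ``faithful'' needs the normal subgroup theorem (the kernel of $\Gamma\to\OG(\NS(Y))$ is a priori only normal, though Margulis does make an infinite image virtually faithful). The remaining steps --- superrigidity forcing an algebraic $\PGL_{d+1}$-action, the fact that the minimal positive-dimensional homogeneous space of $\PGL_{d+1}$ is $\p^d$ (which yields both the bound $n\geq d$ and the identification $X\simeq\p^d_{\C}$ in the equality case), and the elementary $d=1$ case via free subgroups acting on curves --- are in the right spirit and track the published argument. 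So your proposal is a reasonable summary of the strategy of \cite{cantat2014algebraic}, but as a blind proof it assumes its own core step; in the context of this paper the correct move is the citation, and a self-contained proof would require actually carrying out the $p$-adic regularization.
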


Let $p$ be an odd prime. We denote by $\Gamma_p\subset\SL_{d+1}(\Z)$ the congruence subgroup mod $p$, i.e. the kernel of the homomorphism $\rho\colon\SL_{d+1}(\Z)\to\SL_{d+1}(\F_p)$ given by reduction modulo $p$. Since $p$ is odd by assumption, $\Gamma_p$ intersects the center of $\SL_{d+1}(\Z)$ only in the identity and hence, the restriction of the quotient homomorphism $\GL_{d+1}(\kk)\to\PGL_{d+1}(\kk)$ to $\Gamma_p$ is injective and we can consider $\Gamma_p$ as a subgroup of $\PGL_{d+1}(\kk)$. We can now apply Theorem~\ref{thm:cantatxie} to $\Gamma_p$ and obtain the following:
\vskip\baselineskip

\begin{corollary}\label{cor:cantatxie}
Let $d\geq2$, $\kk$ a field of characteristic $0$ and $\varphi\colon\Gamma_p\to\Cr_d(\kk)$ an injective group homomorphism. Then the degree of the elements $\varphi(\Gamma_p)$ is bounded.
\end{corollary}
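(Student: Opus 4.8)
The plan is to transport the problem from $\Cr_d(\kk)$ to $\Cr_d(\C)=\Bir(\p^d_\C)$, where Theorem~\ref{thm:cantatxie} is available, and then to convert the geometric conclusion of that theorem into a uniform degree bound. Since $\Gamma_p$ has finite index in $\SL_{d+1}(\Z)$ it is finitely generated; let $\gamma_1,\dots,\gamma_m$ be generators. The images $\varphi(\gamma_1),\dots,\varphi(\gamma_m)$ involve only finitely many coefficients, so they all lie in $\Cr_d(\kk_0)$ for some subfield $\kk_0\subset\kk$ that is finitely generated over $\Q$. As $\Cr_d(\kk_0)$ is a subgroup of $\Cr_d(\kk)$, the entire image $\varphi(\Gamma_p)$ lies in $\Cr_d(\kk_0)$. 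Being finitely generated over $\Q$ and of characteristic $0$, the field $\kk_0$ embeds into $\C$; fix such an embedding $\iota\colon\kk_0\hookrightarrow\C$.

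Applying $\iota$ to the coefficients of a Cremona transformation yields an injective group homomorphism $\theta\colon\Cr_d(\kk_0)\to\Cr_d(\C)$. I would next check that $\theta$ preserves degree: writing $g=[f_0:\dots:f_d]$ with $\gcd(f_0,\dots,f_d)=1$, the degree of $\theta(g)$ could drop below $\deg g$ only if the $\iota(f_i)$ acquired a common factor over $\C$, and this cannot happen because coprimality of a finite family of polynomials is preserved under field extension (the common zero locus keeps codimension $\geq 2$). Composing, $\psi:=\theta\circ\varphi\colon\Gamma_p\to\Cr_d(\C)$ is an injective homomorphism into $\Bir(\p^d_\C)$, and because $\theta$ is degree-preserving it suffices to bound the degrees occurring in $\psi(\Gamma_p)$.

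Now I would invoke Theorem~\ref{thm:cantatxie}(2) with $X_\C=\p^d_\C$, so that $n=d$: there is a birational transformation $f\colon\p^d_\C\dashrightarrow\p^d_\C$ with $f\,\psi(\Gamma_p)\,f^{-1}\subset\aut(\p^d_\C)=\PGL_{d+1}(\C)$. Every element of $\PGL_{d+1}(\C)$ has degree $1$, so for each $\gamma\in\Gamma_p$ the element $L_\gamma:=f\,\psi(\gamma)\,f^{-1}$ has degree $1$. Writing $\psi(\gamma)=f^{-1}L_\gamma f$ and using submultiplicativity of the degree under composition gives $\deg\psi(\gamma)\le\deg(f^{-1})\deg(L_\gamma)\deg(f)=\deg(f)\deg(f^{-1})$, a bound independent of $\gamma$. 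Hence $\psi(\Gamma_p)$, and therefore $\varphi(\Gamma_p)$, is of bounded degree.

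The main obstacle I expect lies in the field-theoretic reduction rather than in the final degree estimate. One must ensure that the entire (infinite) image $\varphi(\Gamma_p)$ is defined over a single field finitely generated over $\Q$ — which is exactly what finite generation of $\Gamma_p$ provides — and one must justify that passing to $\C$ via $\iota$ changes no degrees. The latter hinges on the invariance of polynomial coprimality under field extension; without it, the bound obtained over $\C$ could not be pulled back to $\kk_0$, and the whole strategy would collapse.
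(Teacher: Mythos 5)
Your proof is correct and follows essentially the same route as the paper: use finite generation of $\Gamma_p$ to place $\varphi(\Gamma_p)$ inside $\Cr_d$ of a field finitely generated over $\Q$, embed that field into $\C$, and apply Theorem~\ref{thm:cantatxie} to conjugate the image into $\aut(\p^d_{\C})$. The only difference is that you spell out two points the paper leaves implicit — that the embedding $\Cr_d(\kk_0)\hookrightarrow\Cr_d(\C)$ preserves degrees (via invariance of coprimality under field extension) and that conjugation into $\PGL_{d+1}(\C)$ yields a uniform bound via submultiplicativity of the degree — both of which you handle correctly.
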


\begin{proof}
	Let $F\subset\kk$ be the smallest subfield over which $\varphi(\Gamma_p)$ is defined.  We can consider $\varphi(G)$ as a subgroup of $\Cr_d(F)\subset\Cr_d(\kk)$. Since $\Gamma_p$ is finitely generated, $F$ is a finitely generated extension of $\Q$ and as such embeds into $\CC$ and we may consider $\varphi(G)$ as a subgroup of $\Cr_d(F)\subset\Cr_d(\CC)$. By Theorem~\ref{thm:cantatxie}, $\varphi(G)$ is conjugate in $\Cr_d(\CC)$ to a subgroup of $\aut(\p^d_{\C})$. In particular, $\varphi(\Gamma_p)$ is of bounded degree. 
\end{proof}

Now, we study continuous embeddings of $\PGL_{d+1}(\kk)$ into $\Cr_d(\kk)$. 

\begin{lemma}\label{lem:PSL}
	Let  $\kk$ be either the field of real numbers $\R$ or the field of complex numbers $\CC$ and $d\geq 1$.  Let $\varphi\colon\PSL_{d+1}(\kk)\to\Cr_d(\kk)$ be an injective group homomorphism that is Euclidean continuous. Then there exists an element $f\in\Cr_d(\kk)$ such that the group $f\varphi(\PSL_{d+1}(\kk))f^{-1}$ is contained in $\aut(\p^d_{\kk})$.
\end{lemma}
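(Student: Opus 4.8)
The plan is to reduce the problem to the finitely generated congruence subgroup $\Gamma_p \subset \PSL_{d+1}(\kk)$ and transport the conclusion of Corollary~\ref{cor:cantatxie} back to all of $\PSL_{d+1}(\kk)$ by a continuity-and-density argument. First I would fix an odd prime $p$ and restrict $\varphi$ to $\Gamma_p$, which is a finite-index subgroup of $\SL_{d+1}(\Z)$ sitting inside $\PSL_{d+1}(\kk)$; since $\varphi$ is injective, so is $\varphi|_{\Gamma_p}$, and Corollary~\ref{cor:cantatxie} applies verbatim to give that $\varphi(\Gamma_p)$ is of bounded degree. Running the proof of that corollary one step further (via Theorem~\ref{thm:cantatxie}(2)), there is an $f \in \Cr_d(\CC)$ conjugating $\varphi(\Gamma_p)$ into $\aut(\p^d_{\C})$; the content to check is that $f$ can be taken in $\Cr_d(\kk)$ rather than merely in $\Cr_d(\CC)$. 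This should follow because the smallest field of definition $F$ of $\varphi(\Gamma_p)$ is a finitely generated extension of $\Q$ contained in $\kk$, and $\Gamma_p$ is Zariski-dense in $\PGL_{d+1}$, so the conjugating transformation is forced to be defined over $\kk$ (or at worst over a finite extension, which one then descends using that $\kk = \R$ or $\CC$ is the relevant target).

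Having conjugated so that $\varphi(\Gamma_p) \subset \aut(\p^d_{\kk}) = \PGL_{d+1}(\kk)$, the next step is to promote this from $\Gamma_p$ to the whole group $\PSL_{d+1}(\kk)$ using Euclidean continuity. Here I would exploit that $\Gamma_p$ is Zariski-dense in $\PSL_{d+1}(\kk)$ and, more to the point, that its Euclidean closure is large: for $\kk = \R$ or $\CC$, the subgroup $\PSL_{d+1}(\kk)$ is connected (in the $\CC$ case) or has finitely many components (in the $\R$ case) and is generated by one-parameter subgroups, each of which is a Euclidean-continuous image of $\GA$ or $\GM$. The key point is that $f\varphi(-)f^{-1}$ is a Euclidean-continuous homomorphism sending the discrete-but-Zariski-dense $\Gamma_p$ into $\aut(\p^d_{\kk})$, and I want to conclude it sends all of $\PSL_{d+1}(\kk)$ there. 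I would argue that $\aut(\p^d_{\kk})$, being an algebraic subgroup of $\Cr_d(\kk)$, is Euclidean closed, and that the set $\{g \in \PSL_{d+1}(\kk) : f\varphi(g)f^{-1} \in \aut(\p^d_{\kk})\}$ is a subgroup; combining its Euclidean closedness (preimage of a closed set under a continuous map) with the fact that it contains the Zariski-dense $\Gamma_p$, whose Euclidean closure is all of $\PSL_{d+1}(\kk)$, forces it to be everything.

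The main obstacle I anticipate is precisely this last density step: showing that the Euclidean closure of $\Gamma_p$ in $\PSL_{d+1}(\kk)$ is the entire group. This is false for a lattice inside its ambient Lie group in the naive sense (a lattice is Euclidean discrete), so the argument cannot be that $\Gamma_p$ is Euclidean dense. Instead I expect the correct mechanism is to first extend along algebraic one-parameter subgroups: an element $g \in \PSL_{d+1}(\kk)$ of the form $\exp(tN)$ for nilpotent $N$, or a diagonal one-parameter subgroup, gives a $\kk$-morphism $\A^1 \to \PSL_{d+1}(\kk) \to \Cr_d(\kk)$, whose composition with the continuous $\varphi$ and the conjugation is Euclidean continuous on $\A^1(\kk)$; evaluating at the integer (or $\Gamma_p$) points where we already know membership in $\aut(\p^d_{\kk})$, and using that these accumulate suitably, pins down the whole one-parameter subgroup inside $\aut(\p^d_{\kk})$. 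Since $\PSL_{d+1}(\kk)$ is generated by such unipotent one-parameter subgroups (it equals its own commutator and is generated by root subgroups), this yields the conclusion. The delicate bookkeeping is ensuring the $\Gamma_p$-points meet each one-parameter subgroup in enough points to invoke continuity, which is where I would spend the most care.
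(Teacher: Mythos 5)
Your opening step (restricting $\varphi$ to the congruence subgroup $\Gamma_p$ and invoking Corollary~\ref{cor:cantatxie} to get bounded degree) is exactly how the paper begins, but the mechanism you propose for passing from $\Gamma_p$ to all of $\PSL_{d+1}(\kk)$ has a genuine gap --- and it is the very obstruction you flagged yourself, merely reformulated rather than resolved. Fix a unipotent one-parameter subgroup $t\mapsto u_t$ and set $\eta(t)=f\varphi(u_t)f^{-1}$. The set $S=\{t\in\kk : \eta(t)\in\aut(\p^d_\kk)\}$ is indeed a Euclidean closed subgroup of $(\kk,+)$ (preimage of the closed subgroup $\aut(\p^d_\kk)$ under the continuous homomorphism $\eta$), and it contains the parameters coming from $\Gamma_p$, i.e.\ it contains $p\Z$. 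But $p\Z$ is itself a closed subgroup of $(\kk,+)$, so nothing forces $S$ to be bigger than $p\Z$: the $\Gamma_p$-points on a one-parameter subgroup are discrete and do not ``accumulate suitably'' anywhere. Continuity cannot spread a pointwise property from a closed discrete set to the continuum, so the membership property you are trying to propagate simply does not propagate. Your step 2 also hand-waves the descent of the conjugating map from $\CC$ to $\kk$ (``forced to be defined over $\kk$''), which needs an actual argument.

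The paper avoids this by propagating a different invariant: not membership in $\aut(\p^d_\kk)$, but the property of having \emph{bounded degree}, and this is where Euclidean continuity genuinely enters. Compact subsets of $\Cr_d(\kk)$ in the Euclidean topology have bounded degree (Blanc--Furter), and $\varphi$ maps compact sets to compact sets. The paper then writes every element of the elementary subgroup $U_{01}$ as a product of an element of $\Gamma_p$ and an element of the compact set $E_{01}$ of translations with $\|c\|\le p$; since $\varphi(\Gamma_p)$ has bounded degree by Corollary~\ref{cor:cantatxie}, $\varphi(E_{01})$ has bounded degree by compactness, and degrees are submultiplicative, $\varphi(U_{01})$ has bounded degree. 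Conjugating the various $U_{ij}$ to one another and using that every element of $\PSL_{d+1}(\kk)$ is a product of a uniformly bounded number of elementary elements (Gauss elimination), one concludes that all of $\varphi(\PSL_{d+1}(\kk))$ has bounded degree. Only at that point does one conjugate into an automorphism group: Rosenlicht's theorem applied to the Zariski closure (an algebraic subgroup of $\Cr_d(\kk)$) regularizes the image on a $\kk$-variety $X_\kk$, Theorem~\ref{thm:cantatxie} identifies $X_\CC$ with $\p^d_\CC$, and Ch\^atelet's theorem descends the isomorphism to $X_\kk\simeq\p^d_\kk$ --- this last chain is also what replaces your field-of-definition argument. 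Your proof could be repaired exactly by making this switch of invariant: instead of claiming $\eta(t)\in\aut(\p^d_\kk)$ for all $t$, show that $\eta(\kk)$ has bounded degree using the compact pieces between consecutive $\Gamma_p$-points; but that is precisely the paper's argument, not a fix within yours.
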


\begin{proof}
	The statement is trivial for $d=1$, hence we can assume that $d\geq2$.

	Let $p$ be an odd prime and $\Gamma_p\subset\SL_{d+1}(\Z)$ the congruence subgroup mod $p$. We consider $\Gamma_p$ as a subgroup of $\PGL_{d+1}(\kk)$. In particular, $\Gamma_p$ contains the subgroup of elements of the form 
	\[[x_0:\dots:x_d]\mapsto[x_0+kpx_1:x_1:\dots: x_d], \quad k\in\Z.\]
	
	For $i,j=0,\dots,d$, $i\neq j$, define the elementary subgroups
	\[
	U_{ij}:=\left\{[x_0:\dots:x_d]\mapsto[x_0:\dots:x_i+cx_j:x_{i+1}:\dots: x_d]\mid c\in\kk\right\}\subset\PGL_{d+1}(\kk).
	\]
	
	Consider the Euclidean compact subset 
	$$E_{01}:=\{[x_0:\dots:x_d]\mapsto[x_0+cx_1:x_1:\dots: x_d]\mid ||c||\leq p\}\subset\PGL_{d+1}(\kk)$$ where $||\cdot||$ denotes the Euclidean norm on $\kk$. Every element in the subgroup $U_{01}$ can be written as a product of an element in $\Gamma_p$ and an element in $E_{01}$. 
	Since $E_{01}$ is compact, $\varphi(E_{01})$ is of bounded degree \cite[Lemma 5.13]{BlancFurter}.
	By Corollary~\ref{cor:cantatxie}, the image $\varphi(\Gamma_p)$ is of bounded degree as well and we conclude that $\varphi(U_{00})$ is of bounded degree. Since all the elementary subgroups $U_{ij}$ are conjugate to $U_{00}$ in $\PGL_{d+1}(\kk)$ it follows that $\varphi(U_{ij})$ is of bounded degree for any $i,j$.
	
	Using the Gauss-Algorithm one can see that there exists an integer $K\geq1$ only depending on $d$ such that every element in $\PSL_{d+1}(\kk)$ can be written as the product of at most $K$ elements contained in some the subgroups $U_{ij}$. Therefore, $\varphi(\PSL_{d+1}(\kk))$ is of bounded degree and hence is regularisable, i.e. there exists a quasi-projective $\kk$-variety $X_{\kk}$ and a birational transformation $ \p^d_{\kk}\dashrightarrow X_{\kk}$ that conjugates $\varphi(\PSL_{d+1}(\kk))$ to a subgroup of $\aut(X_{\kk})$ \cite[Theorem 1]{Rosenlicht}\footnote{\label{note} In fact, we apply \cite[Theorem 1]{Rosenlicht} to the Zariski closure of $\varphi(\PSL_{d+1}(\kk))$ in $\Cr_d(\kk)$, which is an algebraic subgroup of $\Cr_d(\kk)$.}.
	By Theorem~\ref{thm:cantatxie}, we have $X_{\C}\simeq\p^d_{\C}$ and 
	hence, by Ch\^atelet's theorem, $X_{\kk}\simeq \p^d_{\kk}$ \cite[§IV.I, p.283]{chatelet}. 
\end{proof}

\begin{proposition}\label{prop:emb_PGL}
	Let $\kk$ be a field of characteristic $0$ and let $\varphi\colon \PGL_{d+1}(\kk)\to\Cr_d(\kk)$ be an injective group homomorphism. 
	 If either $\varphi$ is Zariski continuous, or  if $\kk=\R$ or $\kk=\C$ and $\varphi$ is Euclidean continuous,  
	then $\varphi(\PGL_{d+1}(\kk))$ is conjugate in $\Cr_d(\kk)$ to a subgroup of $\aut(\p^d_{\kk})$.
\end{proposition}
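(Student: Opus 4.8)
The plan is to reduce everything to a single statement: that $\varphi(\PGL_{d+1}(\kk))$ is of \emph{bounded degree}. Once this is available, the argument from the second half of the proof of Lemma~\ref{lem:PSL} applies essentially verbatim. Indeed, the Zariski closure $G:=\overline{\varphi(\PGL_{d+1}(\kk))}$ is then a Zariski closed subgroup of bounded degree (the loci $\Cr_d(\kk)_{\leq n}$ being closed), hence an algebraic subgroup of $\Cr_d(\kk)$; by Rosenlicht's theorem it is regularisable, so there is a quasi-projective $\kk$-variety $X_\kk$ of dimension $d$ and a birational map $\sigma\colon\p^d_\kk\rat X_\kk$ with $\sigma\,\varphi(\PGL_{d+1}(\kk))\,\sigma^{-1}\subset\aut(X_\kk)$. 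Restricting to the congruence subgroup $\Gamma_p$ and descending all of this data to a finitely generated subfield $F\subset\kk$ over which it is defined, exactly as in the proof of Corollary~\ref{cor:cantatxie}, we embed $F$ into $\CC$ and feed the resulting injective homomorphism $\Gamma_p\to\aut(X_\CC)$ into Theorem~\ref{thm:cantatxie} to obtain $X_\CC\simeq\p^d_\CC$. Thus $X_\kk$ is a form of $\p^d$ carrying a $\kk$-point (it is birational to $\p^d_\kk$), so Ch\^atelet's theorem gives $X_\kk\simeq\p^d_\kk$; composing $\sigma$ with this isomorphism produces the desired $f\in\Cr_d(\kk)$.

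It remains to prove bounded degree, and this is where the two hypotheses diverge. In the Zariski case I would use that the Zariski topology of $\Cr_d(\kk)$ restricts on $\aut(\p^d_\kk)$ to the usual Zariski topology of the algebraic group $\PGL_{d+1}$. Hence $\varphi$, read as a map from the $\kk$-points of the variety $A:=\PGL_{d+1}$ into $\Cr_d(\kk)$, is a Zariski continuous map in the precise sense of Lemma~\ref{lem:bounded_degree}, and that lemma immediately gives that $\varphi(A(\kk))=\varphi(\PGL_{d+1}(\kk))$ is of bounded degree. In the Euclidean case, where $\kk=\R$ or $\kk=\CC$, I would instead pass through $\PSL_{d+1}(\kk)$: since $\PGL_{d+1}(\kk)/\PSL_{d+1}(\kk)\cong\kk^*/(\kk^*)^{d+1}$ is finite (trivial for $\CC$, of order at most $2$ for $\R$), the subgroup $\PSL_{d+1}(\kk)$ has finite index, and $\varphi|_{\PSL_{d+1}(\kk)}$ is an injective Euclidean continuous homomorphism. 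Lemma~\ref{lem:PSL} then shows that $\varphi(\PSL_{d+1}(\kk))$ is of bounded degree, say contained in $\Cr_d(\kk)_{\leq N}$. Fixing coset representatives $g_1,\dots,g_m$ and using the submultiplicativity $\deg(gh)\leq\deg(g)\deg(h)$ of the degree under composition, every element of $\varphi(\PGL_{d+1}(\kk))$ has degree at most $(\max_i\deg\varphi(g_i))\cdot N$, so $\varphi(\PGL_{d+1}(\kk))$ is of bounded degree as well.

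The hard part will be less the bounded-degree estimate than correctly interfacing the hypotheses with the machinery. In the Zariski case the subtle point is that $\varphi$ genuinely qualifies as a Zariski continuous map out of the variety $\PGL_{d+1}$, which requires knowing that the subspace topology induced on $\aut(\p^d_\kk)$ by the Zariski topology of $\Cr_d(\kk)$ coincides with the intrinsic Zariski topology of $\PGL_{d+1}$; this is exactly the compatibility recalled at the start of Section~\ref{sec:Zariski}. The other delicate point, needed when $\kk$ is an arbitrary field of characteristic $0$ rather than a subfield of $\CC$, is the descent to a finitely generated subfield $F$ so that Theorem~\ref{thm:cantatxie} (which is stated over $\CC$) can be applied to $\Gamma_p$; this works precisely because $\Gamma_p$ is finitely generated, as in Corollary~\ref{cor:cantatxie}.
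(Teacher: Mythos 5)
Your proof is correct and takes essentially the same route as the paper's: bounded degree via Lemma~\ref{lem:bounded_degree} in the Zariski case and via Lemma~\ref{lem:PSL} plus finite index of $\PSL_{d+1}(\kk)$ in the Euclidean case, then passage to the Zariski closure and Rosenlicht's regularisation, then Theorem~\ref{thm:cantatxie} to identify the model geometrically with $\p^d$, and finally Ch\^atelet's theorem to descend to $\kk$. The additional details you supply (the coset-representative degree estimate, and the descent of the $\Gamma_p$-action to a finitely generated subfield embedded in $\CC$ before invoking the theorem of Cantat--Xie) are points the paper leaves implicit, and you handle them correctly.
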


\begin{proof}
	If $\varphi$ is Zariski continuous, then $\varphi(\PGL_{d+1}(\kk))$  is of bounded degree by Lemma~\ref{lem:bounded_degree}. 
	If $\kk=\R$ or $\kk=\C$ and $\varphi$ is Euclidean continuous,  then Lemma~\ref{lem:PSL} implies that $\varphi(\PSL_{d+1}(\kk))$ has bounded degree in $\Cr_d(\kk)$. Since $\PSL_{d+1}(\kk)$ has finite index in $\PGL_{d+1}(\kk)$, hence also $\varphi(\PGL_{d+1}(\kk))$ is of bounded degree.
	
	In either case there exists a quasi-projective $\kk$-variety $X_{\kk}$ and a birational map $\p^d_{\kk}\rat X_\kk$ which conjugates $\varphi(\PGL_{d+1}(\kk))$ to a subgroup of $\aut(X_{\kk})$ \cite[Theorem 1]{Rosenlicht}\footref{note}. 
	By Theorem~\ref{thm:cantatxie}, $X_{\overline{\kk}}\simeq\p^d_{\overline{\kk}}$, where $\overline{\kk}$ is the algebraic closure of $\kk$, and hence, by Ch\^atelet's theorem, $X_{\kk}\simeq\p^d_{\kk}$ \cite[§IV.I, p.283]{chatelet}. 
\end{proof}

\begin{proof}[Proof of Theorem~\ref{main:cremona} and Theorem~\ref{main:euclidean}]
The group automorphism  $\varphi\colon\Cr_d(\kk)\to\Cr_d(\kk)$ induces an injective group  homomorphism $\varphi|_{\aut(\p^d_{\kk})}\colon \aut(\p^d_{\kk})\hookrightarrow\Cr_d(\kk)$.
	Since $\varphi$ is continuous, Proposition~\ref{prop:emb_PGL} implies that up to conjugation in $\Cr_d(\kk)$, we may assume that $\varphi(\aut(\p^d_{\kk}))\subset\aut(\p^d_{\kk})$. 
	As by assumption $\varphi^{-1}$ is a continuous group homomorphism as well, we argue analogously that there exists $f\in\Cr_d(\kk)$ such that $f\varphi^{-1}(\aut(\p^d_{\kk}))f^{-1}\subset\aut(\p^d_{\kk})$. 
	In particular, we have $f\aut(\p^d_{\kk})f^{-1}\subset\aut(\p^d_{\kk})$ and so
	 $f$ normalizes $\aut(\p^d_{\kk})$ and hence $f\in\aut(\p^d_{\kk})$, i.e. $\varphi^{-1}(\aut(\p^d_{\kk}))=\aut(\p^d_{\kk})$.
	With Theorem~\ref{thm:dieudonne} and Lemma~\ref{lem:extension} we conclude that, up to conjuation in $\aut(\p^d_{\kk})$ and up to a field automorphism of $\kk$, we have $\varphi|_{\aut(\p^d_{\kk})}=\id_{\aut(\p^d_{\kk})}$.
	Since $\varphi$ is continuous, Proposition~\ref{pro:determined_by_aut} implies that $\varphi=\id_{\Cr_d(\kk)}$, which yields the claim.

\end{proof}


\section{Polynomial automorphisms}\label{sec:poly}
Denote by $\aut(\A_\kk^d)$ the {\it group of polynomial automorphisms} of the affine $d$-space $\A_\kk^d$ over a field $\kk$. The goal of this section is to prove Theorem~\ref{main:aut}. If we choose an embedding of $\A_\kk^d$ into $\p_\kk^d$, we can naturally consider $\aut(\A_\kk^d)$ as a subgroup of $\Cr_d(\kk)$.  The {\it Zariski topology on $\aut(\A_\kk^d)$} is the induced topology of the Zariski topology on $\Cr_d(\kk)$. 

It can be shown that if $\kk$ is algebraically closed, then $\aut(\A_\kk^d)$ can be naturally equipped with the additional structure of a so-called {\it ind-group}, which induces the {\it ind-topology} on $\aut(\A_\kk^d)$. We refer to \cite{furterkraft} for a definition and details about this notion. We will only need that an ind-closed subgroup of bounded degree of $\aut(\A_\kk^d)$ has the structure of an affine algebraic group such that the induced action on $\A_\kk^d$ is algebraic \cite[Proposition~3.7]{stampfli2013contributions}. 

Denote by $\T_d(\kk)\subset\aut(\A^d_\kk)$ the diagonal linear transformations. 
Let $\ak$ be the algebraic closure of a field $\kk$. In what follows, we always consider $\aut(\A_\kk^d)$ as a subgroup of $\aut(\A^d_\ak)$ through the obvious inclusion.

\begin{lemma}
\label{lem:torus}
Let $\kk$ be an infinite field and $\varphi\colon\aut(\A^d_{\kk})\to\aut(\A^d_{\kk})$ a group automorphism that is a homeomorphism with respect to the Zariski topology. 
Then there exists $f\in\aut(\A^d_{\ak})$ such that $f\varphi(T_d(\kk))f^{-1}\subset T_d(\ak)$ is a dense subgroup.
\end{lemma}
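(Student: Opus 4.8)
The plan is to prove that the closure of $\varphi(T_d(\kk))$ inside $\aut(\A^d_{\ak})$ is a maximal torus of $\aut(\A^d_{\ak})$, and then to linearize it and conjugate it to the standard diagonal torus. First I would note that $T_d(\kk)$ is exactly the image of the $\kk$-morphism $\psi\colon\GM^d\to\aut(\A^d_\kk)$, $(t_1,\dots,t_d)\mapsto\big((x_1,\dots,x_d)\mapsto(t_1x_1,\dots,t_dx_d)\big)$, entirely analogous to Example~\ref{ex:deformation}. Since $\varphi$ is Zariski continuous, $\varphi\circ\psi$ is a Zariski continuous map from the variety $\GM^d$ into $\Cr_d(\kk)$, so Lemma~\ref{lem:bounded_degree} shows that $\varphi(T_d(\kk))$ is of bounded degree. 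Working inside $\aut(\A^d_{\ak})$, I let $\overline H$ be the closure of $\varphi(T_d(\kk))$; being the closure of a bounded-degree set it is ind-closed and of bounded degree, hence by \cite[Proposition~3.7]{stampfli2013contributions} it is an affine algebraic group acting algebraically, and (as a subgroup of the automorphism group) faithfully, on $\A^d_{\ak}$. Because $\varphi$ is a homeomorphism and $T_d(\kk)=\psi(\GM^d(\kk))$ is irreducible, $\varphi(T_d(\kk))$ is irreducible, and since a connected group over $\kk$ is geometrically connected, $\overline H$ is connected. Finally $\overline H$ is commutative, as the commutator map vanishes on the dense commutative subgroup $\varphi(T_d(\kk))$.

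Next I would determine the structure of $\overline H$. By the classification of connected commutative affine algebraic groups in characteristic $0$, $\overline H=T\times U$ with $T$ a torus and $U\cong\GA^{k}$ unipotent. The torsion decides the rank: the group isomorphism $\varphi$ carries the $2$-torsion of $T_d(\kk)\cong(\kk^{*})^{d}$, which is $(\Z/2)^{d}$, injectively into $\overline H[2]$; but $U$ is torsion-free in characteristic $0$, so $\overline H[2]=T[2]\cong(\Z/2)^{\dim T}$, and therefore $\dim T\geq d$. In the other direction, a torus acting faithfully on $\A^d_{\ak}$ has rank at most $d$ (a positive-dimensional generic stabilizer would fix a dense subset and hence act trivially, contradicting faithfulness, so the generic orbit has dimension equal to the rank and is at most $d$). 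Thus $\dim T=d$ and $T$ is a maximal torus acting effectively on $\A^d_{\ak}$.

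To finish I would invoke the theorem of Bialynicki-Birula that an effective action of a $d$-dimensional torus on $\A^d_{\ak}$ is linearizable: there is $f\in\aut(\A^d_{\ak})$ with $fTf^{-1}$ a rank-$d$ subtorus of $\GL_d$, which after a further linear conjugation equals $T_d(\ak)$. A direct computation shows that $T_d$ is self-centralizing, since an automorphism commuting with all diagonal scalings must send each $x_i$ to $c_ix_i$; that is, $\cent_{\aut(\A^d_{\ak})}(T_d(\ak))=T_d(\ak)$. As $\overline H$ is commutative and contains $T$, we get $f\overline Hf^{-1}\subseteq\cent_{\aut(\A^d_{\ak})}(T_d(\ak))=T_d(\ak)$, while $f\overline Hf^{-1}\supseteq fTf^{-1}=T_d(\ak)$; hence $U$ is trivial and $f\overline Hf^{-1}=T_d(\ak)$. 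Since $\varphi(T_d(\kk))$ is dense in $\overline H$ and conjugation by $f$ is a homeomorphism, $f\varphi(T_d(\kk))f^{-1}$ is dense in $T_d(\ak)$, which is the assertion.

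The step I expect to be the main obstacle is the passage from \emph{commutative algebraic group} to \emph{torus of rank exactly $d$}. The self-centralizing property of $T_d$ by itself cannot exclude a unipotent part, because in the Zariski topology the translation subgroup is \emph{also} a self-centralizing connected commutative subgroup of $\aut(\A^d_{\ak})$; it is precisely the nontrivial torsion of $(\kk^{*})^{d}$ that distinguishes a torus from such a unipotent group, and the argument must be uniform over all infinite fields, including $\kk=\C$, where one cannot fall back on a Euclidean compactness argument. The second delicate input is passing from a rank-$d$ torus action to genuine conjugacy with the diagonal torus, which is exactly where Bialynicki-Birula's linearization theorem is needed.
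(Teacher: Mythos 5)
Your proposal follows the paper's own route almost step for step: Zariski continuity plus Lemma~\ref{lem:bounded_degree} gives bounded degree; the ind-closure $\overline{H}$ is then an affine algebraic group acting algebraically on $\A^d_{\ak}$ by \cite[Proposition~3.7]{stampfli2013contributions}; the structure theorem splits it as $(\text{torus})\times(\text{unipotent})$; counting elements of order $2$ forces the torus factor to have rank at least $d$; faithfulness bounds the rank by $d$; the theorem of Bialynicki-Birula \cite[Theorem~2]{MR0200279} linearizes the rank-$d$ torus to $T_d(\ak)$; and self-centralization of $T_d(\ak)$ eliminates the unipotent factor. Up to the order of the last two steps (the paper kills the unipotent part first, via the fact that a commutative group containing a maximal torus lies in its centralizer, and only then linearizes; you linearize first and then kill the unipotent part, which works equally well), this is the paper's argument.

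The genuine gap is the characteristic. The lemma is stated for an \emph{arbitrary} infinite field, and the paper needs that generality: Theorem~\ref{main:aut} covers infinite perfect fields, including for instance $\kk=\overline{\F}_2$. Your rank bound $\dim T\geq d$ rests on the unipotent part $U$ being torsion-free, which you justify only ``in characteristic $0$''. In characteristic $p\neq 2$ the step survives, since unipotent groups then have only $p$-power torsion and hence no $2$-torsion; but in characteristic $2$ it fails outright: $\GA$ over $\overline{\F}_2$ consists entirely of elements of order $2$, so all $2^d$ involutions of $\varphi(T_d(\kk))$ could a priori land in the unipotent factor, and nothing in your argument rules out, say, $\overline{H}$ being a vector group -- which, as you yourself observe, is also connected, commutative and self-centralizing. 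The paper closes exactly this case by a different mechanism: since $t\mapsto t^2$ has Zariski dense image in $T_d(\kk)$ and $\varphi$ is continuous, the set of squares of $D_{\ak}=\overline{\varphi(T_d(\kk))}$ is dense in $D_{\ak}$, while in characteristic $2$ squaring collapses the unipotent factor, so density forces $s=0$; the equality $r=d$ is then obtained from invariance of Krull dimension under the homeomorphism $\varphi$, rather than from torsion. Note that this last step uses the full hypothesis that $\varphi$ is a homeomorphism, which your torsion argument never needed -- an indication that the characteristic-$2$ case really does require a separate idea, and your proof as written establishes the lemma only away from characteristic~$2$.
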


\begin{proof}
	Let $D_{\ak}$ be the  closure  of $\varphi(\T_d(\kk))$ in $\aut(\A^d_{\ak})$ with respect to the ind-topology. Since $\varphi(\T_d(\kk))$ and hence  $D_{\ak}$ are of bounded degree, we obtain that $D_{\ak}$ is an affine algebraic group acting algebraically on $\A^d_\ak$. By the structure theorem for commutative affine algebraic groups over algebraically closed fields (see \cite[Théorème~XVII.7.2.1]{MR0274459}), $D_{\ak}$ is isomorphic over $\ak$ to $T_r(\ak)\times U_s(\ak)$, where $T_r$ is a split torus of dimension $r\geq0$ and $U_s$ a unipotent group of dimension $s\geq0$.  
	
	First, suppose that $\car(\kk)\neq 2$. 
	The image $\varphi(T_d(\kk))$ contains $2^d$ elements of order two, hence  $D_{\ak}$ contains at least $2^d$ elements of order two, and it follows that $r\geq d$.
	Thus $D_{\ak}$ contains a maximal torus in $\aut(\A_{\ak}^d)$ and as such coincides with its centraliser. So we obtain that $s=0$.  
	
	Now, if $\car(\kk)=2$, as $\varphi$ is a homeomorphism with respect to the Zariski topology by hypothesis, and since $T_d(\kk)^2$ is Zariski dense in $T_d(\kk)$, we have that $\varphi(T_d(\kk))^2$ is dense in $\varphi(T_d(\kk))$ and hence in $\overline{\varphi(T_d(\kk))}$. This implies in particular that $\overline{\varphi(T_d(\kk))}^2$ is dense in $\overline{\varphi(T_d(\kk))}$, i.e. $(D_\ak)^2$ is dense in $D_\ak$.
	Now, at the same time, $\car(\kk)=2$ implies that $(D_{\ak})^2\simeq T_r(\ak)$, and we obtain  $s=0$. Since $\varphi$ is a homeomorphism, it preserves the Krull dimension, therefore $\varphi(\T_d(\kk))$ and hence also $D_{\ak}$ has Krull dimension $d$, it follows also that $d=r$.
	
	So we have that $D_{\ak}$ is a torus of rank $d$. From
	\cite[Theorem 2]{MR0200279} it follows that $D_{\ak}$ is conjugate in $\aut(\A_{\ak}^d)$ to the standard torus $T_d(\ak)$ in $\aut(\A_{\ak}^d)$. In particular, there exists an element $f\in\aut(\A_{\ak}^d)$ such that $f\varphi(T_d(\kk))f^{-1}\subset T_d(\ak)$. Since $\varphi(T_d(\kk))$ is dense in $D_{\ak}$, we obtain that $f\varphi(T_d(\kk))f^{-1}$ is dense in $T_d(\ak)$. 
\end{proof}

Let us denote by $\GL_d(\kk)\subset \aut(\A^d)$ the subgroup of linear automorphisms, by $\aff_d(\kk)$ the subgroup of affine transformations, and by $\s_d\subset\GL_d(\kk)$ the subgroup of coordinate permutations. 
For a group $G$ we denote by  $\zenter(G)$ the center of $G$, and for a set $A\subset G$ we denote by $\cent_G(A)$  and by $\norm_G(A)$ the centraliser and normaliser of $A$ in $G$, respectively.

We leave it as an exercise to the reader to verify that for an infinite field $\kk$ the centralizer of $\zenter(\GL_d(\kk))$ in $\aut(\A_\kk^d)$  is $\GL_d(\kk)$, that $\norm_{\aut(\A^d_\kk)}(T_d(\kk)) =\T_d(\kk)\rtimes \s_d$ and $C(\T_d(\kk)\rtimes\s_d))=\zenter(\GL_d(\kk))$.

\begin{lemma}
\label{lem:GL}
Let $\kk$ be an infinite field and $\varphi\colon\aut(\A^d_{\kk})\to\aut(\A^d_{\ak})$ an injective group endomorphism. Assume that $\varphi(T_d(\kk))$ is contained in $T_d(\ak)$ as a dense subgroup. 
Then $\varphi(\GL_d(\kk))$ is contained in $\GL_d(\ak)$. Moreover, this image is dense.
\end{lemma}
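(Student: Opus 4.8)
The plan is to characterise $\GL_d(\kk)$ group-theoretically through the centraliser identities recorded before the statement and then to transport this description across $\varphi$, exploiting the density hypothesis together with the fact that centralisers and normalisers are closed subsets of $\aut(\A^d_\ak)$. Set $Z_\kk:=\zenter(\GL_d(\kk))$, the scalar matrices, which form a subgroup of $T_d(\kk)$, and write $N_\ak:=\norm_{\aut(\A^d_\ak)}(T_d(\ak))=T_d(\ak)\rtimes\s_d$. As a first step I would show $\varphi(\s_d)\subseteq N_\ak$: since $\s_d$ normalises $T_d(\kk)$, the image $\varphi(\s_d)$ normalises $\varphi(T_d(\kk))$ and, normalisers being closed, also its closure $T_d(\ak)$. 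Writing $\pi\colon N_\ak\to\s_d$ for the projection with kernel $T_d(\ak)$, I would then check that $\pi\circ\varphi|_{\s_d}$ is injective: if $\varphi(\sigma)\in T_d(\ak)$ then $\varphi(\sigma)$ commutes with $\varphi(T_d(\kk))$, so by injectivity of $\varphi$ the permutation $\sigma$ centralises $T_d(\kk)$, which for infinite $\kk$ forces $\sigma=\id$. Being an injective endomorphism of the finite group $\s_d$, the map $\pi\circ\varphi|_{\s_d}$ is bijective; in particular some $n_0\in\varphi(\s_d)\subseteq N_\ak$ has $\pi(n_0)=\tau$ a $d$-cycle.

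Next I would transport the centre. Since $Z_\kk$ is central in $\GL_d(\kk)$ it commutes with $\s_d$, so $\varphi(Z_\kk)\subseteq T_d(\ak)$ commutes with $n_0$; as conjugation by $n_0$ acts on $T_d(\ak)$ through the coordinate permutation $\tau$ and the only torus elements fixed by a $d$-cycle are the scalars, this gives $\varphi(Z_\kk)\subseteq Z_\ak:=\zenter(\GL_d(\ak))$. Because $Z_\kk\cong\kk^\ast$ is infinite and $\varphi$ is injective, $\varphi(Z_\kk)$ is an infinite, hence dense, subgroup of $Z_\ak\cong\GM(\ak)$. Now $\GL_d(\kk)=\cent_{\aut(\A^d_\kk)}(Z_\kk)$ commutes with $Z_\kk$, so $\varphi(\GL_d(\kk))$ commutes with $\varphi(Z_\kk)$ and, again because centralisers are closed, with $\overline{\varphi(Z_\kk)}=Z_\ak$. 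Hence $\varphi(\GL_d(\kk))\subseteq\cent_{\aut(\A^d_\ak)}(Z_\ak)=\GL_d(\ak)$, which is the first assertion.

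For density, observe that the elements of $\varphi(\GL_d(\kk))$ are now linear, hence of degree one, so the closure $H:=\overline{\varphi(\GL_d(\kk))}$ is a closed algebraic subgroup of $\GL_d(\ak)$. It contains the maximal torus $T_d(\ak)$, and by the first paragraph $\norm_H(T_d(\ak))/T_d(\ak)=\s_d$. The identity component $H^\circ$ then contains $T_d(\ak)$ and is generated by it together with the root subgroups it contains; the associated set of roots is a closed subsystem of the system of type $A_{d-1}$ that is stable under $\s_d$, hence, by transitivity of $\s_d$ on these roots, either empty or everything. If it were empty then $H^\circ=T_d(\ak)$ and $H\subseteq N_\ak$, which is impossible since $N_\ak$ is virtually abelian whereas $\GL_d(\kk)\supseteq\SL_d(\kk)$ is not. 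Therefore $H^\circ=\GL_d(\ak)$ and $H=\GL_d(\ak)$, giving density. I expect this last step to be the main obstacle: a maximal torus together with the full Weyl group only force $H\supseteq N_\ak$, so one must separately exclude $H=N_\ak$, and it is precisely here that the non-(virtual-)abelianness of $\SL_d(\kk)$ is essential; the earlier steps rest only on injectivity, the density hypothesis, and the closedness of centralisers and normalisers.
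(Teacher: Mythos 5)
Your proof is correct, and it splits naturally into two halves that compare differently with the paper. For the containment $\varphi(\GL_d(\kk))\subseteq\GL_d(\ak)$ you follow essentially the paper's route: the density hypothesis gives $\varphi(\s_d)\subseteq\norm_{\aut(\A^d_\ak)}(T_d(\ak))=T_d(\ak)\rtimes\s_d$, injectivity of $\varphi$ gives $\varphi(\s_d)\cap T_d(\ak)=\{\id\}$, the centre of $\GL_d(\kk)$ is pushed into the scalars $\zenter(\GL_d(\ak))$, and taking centralisers of the (dense, because infinite) image of the centre yields $\GL_d(\ak)$; your only variation is to pin down the scalars using a single element $n_0$ covering a $d$-cycle, where the paper computes $\cent_{T_d(\ak)\rtimes\s_d}(\varphi(\s_d))$ -- the same substance. (Both you and the paper pass from ``commutes with a dense subgroup'' to ``commutes with its closure'' without detail; the clean justification is that for fixed $f$ the map $z\mapsto fzf^{-1}z^{-1}$ is a morphism from the irreducible curve $\GM$, so its fibre over $\id$ is closed and, being infinite, is everything.) Where you genuinely diverge is the density assertion. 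The paper argues by Gauss elimination: $\GL_d(\ak)$ is the image of a surjective multiplication morphism $V_1\times\dots\times V_N\to\GL_d(\ak)$ with each $V_l$ equal to $T_d(\ak)$ or an elementary subgroup $U_{ij}(\ak)$, and each factor is claimed to meet $\varphi(\GL_d(\kk))$ in a dense subset, the elementary factors via the conjugates $tE_{ij}t^{-1}$, $t\in\varphi(T_d(\kk))$. You instead pass to the closure $H=\overline{\varphi(\GL_d(\kk))}$, a closed subgroup of maximal rank whose root subset is stable under the full Weyl group $\s_d$ (since $\pi(\varphi(\s_d))=\s_d$), invoke transitivity of $\s_d$ on the roots of $A_{d-1}$, and exclude $H\subseteq T_d(\ak)\rtimes\s_d$ because that group is virtually abelian while $\SL_d(\kk)$ is not. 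Your route costs the structure theory of connected subgroups normalized by a maximal torus (generation by the torus and the root subgroups contained in $H^\circ$), which is heavier than the paper's elementary factorization; but it buys real robustness: it needs no information about where $\varphi$ sends unipotent elements. This matters, because the paper's claim that $\{tE_{ij}t^{-1}\mid t\in\varphi(T_d(\kk))\}$ is contained in $\varphi(\GL_d(\kk))$ presupposes $E_{ij}\in\varphi(\GL_d(\kk))$, which is nowhere established ($\varphi(E_{ij})$ is merely known to be some linear map, and its torus orbit need not lie in, or be dense in, $U_{ij}(\ak)$); your closure argument sidesteps exactly this point and can be read as a repair of that step.
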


\begin{proof}
Since $\varphi(T_d(\kk))$ is dense in $T_d(\ak)$ by assumption, we have 
\[  
\norm_{\aut(\A^d_\ak)}(\varphi(T_d(\kk)))=\norm_{\aut(\A^d_\ak)}(T_d(\ak))=T_d(\ak)\rtimes\s_d.
\]
As the image of the normaliser is contained in the normaliser of the image, this implies that $\varphi(\s_d)\subset T_d(\ak)\rtimes\s_d$.
Consider the natural projection $\pi\colon\T_d(\ak)\rtimes\s_d\to\s_d$. Its kernel is equal to $\T_d(\ak)$, which is equal to $\cent_{\aut(\A^d_\ak)}(T_d(\kk))$. 
Then
\[
\varphi(\s_d)\cap\T_d(\ak)=\varphi(\s_d)\cap\cent_{\aut(\A^d_\ak)}(\T_d(\ak))=\varphi(\s_d\cap\T_d(\ak))=\{\id\}
\] 
implies that up to an automorphism of $\s_d$ the restriction $\varphi|_{\s_d}$ is a section of $\pi$.
Therefore, $\cent_{\T_d(\ak)\rtimes\s_d}(\varphi(\s_d))=\zenter(\GL_d(\ak))$.
It follows that
\[
\varphi(\zenter(\GL_d(\kk))=\varphi\left(\cent_{\T_d(\kk)\rtimes\s_d}(\s_d)\right)\subset\cent_{\T_d(\ak)\rtimes\s_d}(\varphi(\s_d))=\zenter(\GL_d(\ak)).
\]
As $\kk$ is infinite and $\varphi$ is injective, the image $\varphi(\zenter(\GL_d(\kk)))$ is dense in $\zenter(\GL_d(\ak))$. 
It follows that
\begin{align*}
\varphi(\GL_d(\kk))&=\varphi\left(\cent_{\aut(\A^d_{\kk})}(\zenter(\GL_d(\kk)))\right)\\
&\subseteq\cent_{\aut(\A^d_{\ak})}(\varphi(\zenter(\GL_d(\kk))))\\
&=\cent_{\aut(\A^d_{\ak})}(\zenter(\GL_d(\ak)))=\GL_d(\ak).
\end{align*}
It remains to show that $\varphi(\GL_d(\kk))$ is dense in $\GL_d(\ak)$.
For $i,j=1,\dots,d$, $i\neq j$, we define the elementary subgroups
	\[
	U_{ij}(\ak):=\left\{(x_1,\dots,x_d)\mapsto(x_1,\dots,x_i+cx_j,x_{i+1},\dots,x_d)\mid c\in\ak\right\}\subset\GL_d(\ak).
	\]
	and
	\[
	U_{ij}(\ak)\ni E_{ij}\colon (x_1,\dots,x_d)\mapsto(x_1,\dots,x_i+x_j,x_{i+1},\dots,x_d)
	\]
	Observe that 
	\[\{tE_{ij}t^{-1}\mid t\in\varphi(T_d(\kk))\}\subseteq \{tE_{ij}t^{-1}\mid t\in T_d(\ak)\}=U_{ij}(\ak) \]
	is a dense subset which is also contained in $\varphi(\GL_d(\kk))$. 
	Recall that there exists an integer $N\geq 1$ and a surjective morphism $V_1\times\dots\times V_N\to\GL_d(\ak)$, $(v_1,\dots,v_N)\mapsto v_1v_2\cdots v_N$, where each $V_l$ is equal to $T_d(\ak)$ or to $U_{ij}(\ak)$ for some $i,j$. 
	Since $V_l':=V_l\cap\varphi(\GL_d(\kk))$ is dense in $V_l$ as remarked above, this implies that $\varphi(\GL_d(\kk))$ is dense in $\GL_d(\ak)$.
\end{proof}

Let $G$ be a group acting on a group $A$ by $\sigma\colon a\mapsto {}^\sigma\! a$ for $\sigma\in G, a\in A$. Recall that the first Galois cohomology $H^1(G,A)$ is defined as the set of {\it cocycles}, i.e. maps $\nu\colon G\to A$ satisfying $\nu(\sigma\tau)=\nu(\sigma)\ {}^\sigma\!\nu(\tau)$ for all $\sigma, \tau\in G$, up to the following equivalence relation: two cocycles $\nu$ and $\eta$ from $G$ to $A$ are equivalent if there exists an element $a\in A$ such that $\eta(\sigma)=a^{-1}\nu(\sigma)\ {}^\sigma\!a$ for all $\sigma\in G$. 

\begin{lemma}
\label{lem:Hilbert}
	Let $\kk$ be a perfect field and $\varphi\colon \GL_d(\kk)\to\aut(\A^d_\kk)$ an injective homomorphism. Assume that there is an element $f\in\aut(\A^d_{\ak})$ such that $f\varphi(\GL_d(\kk))f^{-1}\subset \GL_d(\ak)$ as a dense subgroup, then there exists an element $g\in\aut(\A^d_{\kk})$ such that $g\varphi(\GL_d(\kk))g^{-1}\subset \GL_d(\kk)$.
\end{lemma}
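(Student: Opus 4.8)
The plan is to run a Galois descent argument driven by Hilbert's Theorem~90, which is precisely what the cohomological preamble preceding the statement is meant to set up. Since $\kk$ is perfect, $G:=\Gal(\ak/\kk)$ is a Galois group, and it acts coefficientwise on $\aut(\A^d_{\ak})$ by $h\mapsto{}^{\sigma}h$, the fixed subgroup being $\aut(\A^d_{\kk})$. Both $\GL_d(\ak)$ and $\Gamma:=\varphi(\GL_d(\kk))$ are $G$-stable: the former because $\GL_d$ is defined over the prime field, the latter because $\Gamma\subseteq\aut(\A^d_\kk)$ consists of $\kk$-points. The goal is to correct the conjugator $f$ by an element of $\GL_d(\ak)$ so that it becomes $G$-invariant, hence an element of $\aut(\A^d_\kk)$.

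First I would prove the key algebraic fact that $\norm_{\aut(\A^d_{\ak})}(\GL_d(\ak))=\GL_d(\ak)$. If $\phi$ normalises $\GL_d(\ak)$, then, since the origin is the unique common fixed point of the linear action of $\GL_d(\ak)$ on $\A^d_{\ak}$, one first gets $\phi(0)=0$; replacing $\phi$ by $(D\phi_0)^{-1}\phi$ one may assume that the linear part satisfies $D\phi_0=\id$. Differentiating the relations $\phi M\phi^{-1}\in\GL_d(\ak)$ at the origin then forces $\phi M\phi^{-1}=M$ for all $M$, so that $\phi$ centralises $\GL_d(\ak)$; commuting with the scalar matrices $tI$ forces $\phi$ to be homogeneous of degree one, i.e.\ linear, whence $\phi=\id$.

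Next I would introduce the comparison cochain $c_\sigma:={}^{\sigma}f\cdot f^{-1}$ and use the density hypothesis to place it inside $\GL_d(\ak)$. Writing $H:=f\Gamma f^{-1}$, which is dense in $\GL_d(\ak)$ by assumption, the $G$-stability of $\Gamma$ gives ${}^{\sigma}H={}^{\sigma}f\,\Gamma\,{}^{\sigma}f^{-1}=c_\sigma H c_\sigma^{-1}$. Taking closures and using that $\sigma$ and conjugation by $c_\sigma$ are homeomorphisms, $c_\sigma\GL_d(\ak)c_\sigma^{-1}=\overline{c_\sigma Hc_\sigma^{-1}}=\overline{{}^{\sigma}H}=\GL_d(\ak)$, so $c_\sigma$ normalises $\GL_d(\ak)$ and hence $c_\sigma\in\GL_d(\ak)$ by the previous step.

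Finally I would set $a_\sigma:=c_\sigma^{-1}=f\,{}^{\sigma}f^{-1}$ and check the cocycle identity $a_{\sigma\tau}=a_\sigma\,{}^{\sigma}a_\tau$, so that $a_\bullet$ is a $1$-cocycle with values in $\GL_d(\ak)$; as $f$ has finitely many coefficients it is defined over a finite Galois extension of $\kk$, so $a_\bullet$ factors through a finite quotient of $G$ and the classical Hilbert~90 (the vanishing of $H^1(G,\GL_d(\ak))$) yields $b\in\GL_d(\ak)$ with $a_\sigma=b^{-1}\,{}^{\sigma}b$. Then $g:=bf$ satisfies ${}^{\sigma}g=g$ for all $\sigma$, i.e.\ $g\in\aut(\A^d_{\kk})$, and $g\Gamma g^{-1}=b(f\Gamma f^{-1})b^{-1}\subseteq b\,\GL_d(\ak)\,b^{-1}=\GL_d(\ak)$; since $g$ and $\Gamma$ are $\kk$-rational, $g\Gamma g^{-1}\subseteq\GL_d(\ak)\cap\aut(\A^d_{\kk})=\GL_d(\kk)$, as required. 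I expect the main obstacle to be the normaliser computation together with its interaction with density: it is exactly the density of $f\Gamma f^{-1}$ in $\GL_d(\ak)$ that forces the comparison cochain into $\GL_d(\ak)$ rather than into some larger subgroup of $\aut(\A^d_{\ak})$, and this is what makes Hilbert~90 directly applicable.
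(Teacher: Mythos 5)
Your proof is correct and follows essentially the same route as the paper's: form the comparison cocycle $\sigma\mapsto {}^{\sigma}f\,f^{-1}$, use the density of $f\varphi(\GL_d(\kk))f^{-1}$ to force it into $\GL_d$ via the normaliser, apply Hilbert~90, and correct $f$ by the resulting coboundary. The only differences are cosmetic (you work with $\Gal(\ak/\kk)$ and note the cocycle factors through a finite quotient, where the paper works directly over a finite Galois extension containing the coefficients of $f$), and you additionally spell out the normaliser computation $\norm_{\aut(\A^d_{\ak})}(\GL_d(\ak))=\GL_d(\ak)$, which the paper uses implicitly.
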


\begin{proof}
	Let $L$ be the finite field extension such that $f$ is defined over $L$. In particular, $H=f\varphi(\GL_d(\kk))f^{-1}\subset\GL_d(L)\subset\aut(\A^d_L)$ as a dense subgroup. Let $\sigma\in G=\Gal(L/\kk)$ be any element of the Galois group of $L$ over $\kk$. Then 
	\[
	{}^\sigma\!H={}^\sigma\!(f\varphi(\GL_d(\kk))f^{-1})={}^\sigma\!f(\varphi(\GL_d(\kk)){}^\sigma\! f^{-1}={}^\sigma\!ff^{-1}Hf({}^\sigma\!f)^{-1}.
	\]
	 Using that $H$ and hence also ${}^\sigma\! H$ is dense in $\GL_d(L)$, we may conclude that ${}^\sigma\!f f^{-1}$ normalizes $\GL_d(L)$ and therefore that ${}^\sigma\!f f^{-1}\in\GL_d(L)$. In other words, there exists an element $g_\sigma\in \GL_d(L)$ such that ${}^\sigma\!f =g_\sigma f$. 
	
	We define now the map $\mu\colon G\to\GL_d(L)$ by $\mu(\sigma):=g_\sigma^{-1}$. It is straightforward to check that $\mu$ satisfies the cocycle condition with respect to the Galois action of $G$ on $\GL_d(L)$. By Hilbert 90 (see for example \cite[Lemma~1, Chapter~III.1.1]{MR1466966}) the first cohomology $H^1(G,\GL_d(\kk))$ is trivial, i.e. every cocycle is  equivalent to the cocycle that is given by mapping every element in $G$ to the identity in $\GL_d(L)$. In other words, there exists an element $a\in\GL_d(L)$ satisfying 
	$a^{-1}\mu(\sigma) {}^\sigma\! a=\id$ for all $\sigma\in G$. Consider now the polynomial automorphism $h=a^{-1}f$. Clearly we 
	have $h\varphi(\GL_d(L))h^{-1}\subset \GL_d(L)$. It remains to show that $h$ is defined over $\kk$. Since $\kk$ is perfect, it is equivalent to prove ${}^\sigma\! h=h$ for all $\sigma\in G$. For any $\sigma\in G$ one calculates ${}^\sigma\! h={}^\sigma\! a^{-1}{}^\sigma\! f=a^{-1}\mu(\sigma){}^\sigma\! f=a^{-1}\mu(\sigma)\mu(\sigma)^{-1} f=h$. As $h$ and $\varphi(\GL_d(\kk))$ are both defined over $\kk$, the group $h\varphi(\GL_d(\kk))h^{-1}$ is defined over $\kk$ as well, so $h\varphi(\GL_d(\kk))h^{-1}\subset\GL_d(\kk)$.
\end{proof}

\begin{lemma}
\label{lem:GLtoGL}
	Let $\kk$ be an infinite perfect field and $\varphi\colon \aut(\A^d_\kk)\to\aut(\A^d_\kk)$ a group automorphism that is a homeomorphism with respect to the Zariski topology. Then there exists an element $f\in\aut(\A^d_\kk)$ such that $f\varphi(\GL_d(\kk))f^{-1}=\GL_d(\kk)$. 
\end{lemma}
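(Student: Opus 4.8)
The plan is to combine the three preceding lemmas, which have already done almost all of the work, into a single statement. The strategy is: first use Lemma~\ref{lem:torus} to conjugate the image of the torus into the standard torus over $\ak$; then use Lemma~\ref{lem:GL} to upgrade this into a statement about $\GL_d$; and finally use Lemma~\ref{lem:Hilbert} to descend the conjugating element from $\ak$ to $\kk$. The only extra issue beyond chaining these results is bookkeeping: verifying that at each step the hypotheses of the next lemma are met, in particular that $\varphi$ remains injective and that the relevant images are \emph{dense}, and keeping track of whether the conjugating element lives in $\aut(\A^d_\kk)$ or only in $\aut(\A^d_\ak)$.

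First I would apply Lemma~\ref{lem:torus}: since $\varphi$ is a group automorphism and a homeomorphism for the Zariski topology, there exists $f_1\in\aut(\A^d_\ak)$ with $f_1\varphi(T_d(\kk))f_1^{-1}\subset T_d(\ak)$ dense. Replacing $\varphi$ by $g\mapsto f_1\varphi(g)f_1^{-1}$ (an injective endomorphism $\aut(\A^d_\kk)\to\aut(\A^d_\ak)$), I would then invoke Lemma~\ref{lem:GL}, whose hypothesis is precisely that $\varphi(T_d(\kk))$ sits densely in $T_d(\ak)$. This yields that $f_1\varphi(\GL_d(\kk))f_1^{-1}$ is contained in $\GL_d(\ak)$ as a \emph{dense} subgroup. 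At this stage the conjugating element $f_1$ may only be defined over the algebraic closure, so the containment is over $\ak$, not over $\kk$.

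The descent to $\kk$ is handled by Lemma~\ref{lem:Hilbert}. Since $\kk$ is perfect by hypothesis, and since $f_1\varphi(\GL_d(\kk))f_1^{-1}\subset\GL_d(\ak)$ is dense, the restriction $\varphi|_{\GL_d(\kk)}$ together with the element $f_1$ satisfies exactly the hypotheses of that lemma (one notes $f_1$ is automatically defined over some finite extension $L/\kk$ because $\GL_d(\kk)$ is generated by finitely many one-parameter subgroups, so only finitely many matrix entries of $f_1$ need be adjoined). Lemma~\ref{lem:Hilbert} then produces an element $f\in\aut(\A^d_\kk)$ with $f\varphi(\GL_d(\kk))f^{-1}\subset\GL_d(\kk)$.

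It remains to promote this inclusion to an \emph{equality}, and this is the step I expect to require the small amount of genuine argument. After conjugating, $\psi:=(g\mapsto f\varphi(g)f^{-1})$ is still an injective homomorphism with $\psi(\GL_d(\kk))\subset\GL_d(\kk)$; applying the entire argument to $\varphi^{-1}$ in place of $\varphi$ produces a conjugate of $\varphi^{-1}(\GL_d(\kk))$ inside $\GL_d(\kk)$, and combining the two inclusions forces the conjugating element to normalize $\GL_d(\kk)$ in $\aut(\A^d_\kk)$. Since $\GL_d(\kk)=\cent_{\aut(\A^d_\kk)}(\zenter(\GL_d(\kk)))$ (recorded as an exercise just before Lemma~\ref{lem:torus}), an element normalizing $\GL_d(\kk)$ must in fact preserve its center and hence lie in $\GL_d(\kk)$; this self-normalizing property upgrades the inclusion $f\varphi(\GL_d(\kk))f^{-1}\subset\GL_d(\kk)$ to the desired equality. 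The main obstacle is thus not any one lemma but the density tracking: one must be careful that after each conjugation the image of the torus (and then of $\GL_d$) remains dense, since Lemma~\ref{lem:GL} and Lemma~\ref{lem:Hilbert} both rely essentially on density rather than on a set-theoretic surjection onto $T_d(\ak)$ or $\GL_d(\ak)$.
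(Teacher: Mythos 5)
Your reduction---chaining Lemma~\ref{lem:torus}, Lemma~\ref{lem:GL} and Lemma~\ref{lem:Hilbert} to obtain $f\in\aut(\A^d_\kk)$ with $f\varphi(\GL_d(\kk))f^{-1}\subseteq\GL_d(\kk)$, together with the density and finite-extension bookkeeping---is exactly the paper's argument. The genuine gap is in your final step, the upgrade from inclusion to equality. Combining your inclusion for $\varphi$ with the analogous one for $\varphi^{-1}$ (say $h\varphi^{-1}(\GL_d(\kk))h^{-1}\subseteq\GL_d(\kk)$) yields an element $u=h\varphi^{-1}(f)$ satisfying $u\GL_d(\kk)u^{-1}\subseteq\GL_d(\kk)$, and nothing more: in an abstract group a subgroup can perfectly well be conjugated into a \emph{proper} subgroup of itself, so this does not ``force $u$ to normalize $\GL_d(\kk)$''; the missing equality is of exactly the same nature as the one you are trying to prove, so the step is circular as written. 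What closes it is an algebraic argument ($u\GL_d(\kk)u^{-1}$ is closed, of bounded degree, and of the same Krull dimension as $\GL_d(\kk)$, hence equal to it), and this is precisely the mechanism the paper applies directly, with no detour through $\varphi^{-1}$: since $\varphi$ is a homeomorphism, $\varphi^{-1}(f^{-1}\GL_d(\kk)f)$ is closed and, by Lemma~\ref{lem:bounded_degree}, of bounded degree, hence an affine algebraic group containing $\GL_d(\kk)$; homeomorphisms preserve Krull dimension, so this group equals $\GL_d(\kk)$, which gives the desired equality in one stroke.

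There is a second flaw: even granting normalization, your deduction that a normalizing element lies in $\GL_d(\kk)$ because $\GL_d(\kk)=\cent_{\aut(\A^d_\kk)}(\zenter(\GL_d(\kk)))$ confuses preserving the center \emph{setwise} with centralizing it \emph{elementwise}. An element normalizing $\GL_d(\kk)$ only induces an automorphism of $\zenter(\GL_d(\kk))\simeq\kk^*$, which need not be the identity---for instance the automorphism $g\mapsto{}^tg^{-1}$ of $\GL_d(\kk)$ preserves the center but inverts it---so membership in the centralizer of the center does not follow. The fact $\norm_{\aut(\A^d_\kk)}(\GL_d(\kk))=\GL_d(\kk)$ is true, but it requires its own argument (for instance: the fixed locus of $\GL_d(\kk)$ in $\A^d_\kk(\kk)$ is the origin, so a normalizing $u$ satisfies $u(0)=0$; the relation $u\circ(\lambda\,\id)=(\mu(\lambda)\,\id)\circ u$ forces $u$ to be homogeneous because $\kk$ is infinite; and constancy of the Jacobian of a polynomial automorphism forces the degree of homogeneity to be $1$), and it is not among the exercises recorded before Lemma~\ref{lem:torus}. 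So the last step of your proposal fails as stated; once you repair it with the closed--bounded-degree--dimension argument, the detour through $\varphi^{-1}$ and self-normalization becomes unnecessary and you recover the paper's proof.
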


\begin{proof}
From Lemma~\ref{lem:torus}, Lemma~\ref{lem:GL} and Lemma~\ref{lem:Hilbert} we obtain that there exists $f\in\aut(\A^d_\kk)$ such that $f\varphi(\GL_d(\kk))f^{-1}\subseteq\GL_d(\kk)$. Since, by assumption, $\varphi$ is a homeomorphism, Lemma~\ref{lem:bounded_degree} implies that $\varphi^{-1}(f^{-1}\GL_d(\kk)f)$ is of bounded degree and closed, and is hence an affine algebraic group containing $\GL_d(\kk)$. The Krull-dimension is preserved by a homeomorphism, thus $\GL_d(\kk)=\varphi^{-1}(f^{-1}\GL_d(\kk)f)$.
\end{proof}

\begin{lemma}
\label{lem:aff_id}
Let $\kk$ be an infinite perfect field and $\varphi\colon \aut(\A^d_\kk)\to\aut(\A^d_\kk)$ a group automorphism and suppose that $\varphi(\GL_d(\kk))=\GL_d(\kk)$.  
Then $\varphi(\aff_d(\kk))=\aff_d(\kk)$ and there exists a field automorphism $\alpha$ of $\kk$ and an element $h\in\GL_d(\kk)$ such that $\varphi(g)=h({}^{\alpha}\!g)h^{-1}$ for all $g\in\aff_d(\kk)$.
\end{lemma}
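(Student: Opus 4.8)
The plan is to pin $\varphi$ down on the translation subgroup after normalising its restriction to $\GL_d(\kk)$ by means of the Dieudonn\'e classification. Throughout write $\tau_v\colon x\mapsto x+v$ for $v\in\kk^d$, and let $N=\{\tau_v\mid v\in\kk^d\}\trianglelefteq\aff_d(\kk)$ be the translation subgroup, so that $\aff_d(\kk)=N\rtimes\GL_d(\kk)$ and $A\tau_vA^{-1}=\tau_{Av}$ for $A\in\GL_d(\kk)$; we may assume $d\geq 2$. First I would restrict $\varphi$ to $\GL_d(\kk)$, which is a group automorphism of $\GL_d(\kk)$ by hypothesis. Theorem~\ref{thm:dieudonne} (with $d$ in place of $d+1$) gives $h\in\GL_d(\kk)$ and a field automorphism $\alpha$ with $\varphi(g)=h({}^\alpha g)h^{-1}$ or $\varphi(g)=h({}^\alpha g^\vee)h^{-1}$ for all $g\in\GL_d(\kk)$. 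Both the inner automorphism $g\mapsto h^{-1}gh$ (here $h\in\GL_d(\kk)\subset\aut(\A^d_\kk)$) and the coefficientwise action $g\mapsto{}^{\alpha^{-1}}g$ are automorphisms of the whole group $\aut(\A^d_\kk)$; composing $\varphi$ with them I may assume to work with an automorphism $\psi$ of $\aut(\A^d_\kk)$ that preserves $\GL_d(\kk)$ and satisfies either $\psi|_{\GL_d(\kk)}=\id$ or $\psi|_{\GL_d(\kk)}(g)=g^\vee$. Since $g\mapsto g^\vee$ is only defined on $\GL_d(\kk)$ and does not extend to $\aut(\A^d_\kk)$, I cannot compose it away, and will instead rule the contragredient alternative out.

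The heart of the argument is to compute $g:=\psi(\tau_{e_1})$. The centraliser of $\tau_{e_1}$ in $\GL_d(\kk)$ contains the subtorus $\Delta=\{\operatorname{diag}(1,s_2,\dots,s_d)\mid s_i\in\kk^*\}$, and in either case $\psi(\Delta)$ again sweeps out $\Delta$ (the contragredient only inverts the entries); hence $g$ commutes with every $\delta\in\Delta$. Writing $g=(g_1,\dots,g_d)$ and comparing monomials in the relation $g\circ\delta=\delta\circ g$ forces $g_1=\phi_1(x_1)$ and $g_j=x_j\phi_j(x_1)$ with $\phi_i\in\kk[x_1]$. Because $g$ is an automorphism of $\A^d_\kk$ preserving the $x_1$-axis, and $\aut(\A^1_\kk)=\aff_1(\kk)$ consists only of affine maps, $\phi_1$ must be affine and the $\phi_j$ ($j\geq2$) nonzero constants; thus $g=(ax_1+b,\,c_2x_2,\dots,c_dx_d)\in\aff_d(\kk)$ with $a,c_2,\dots,c_d\in\kk^*$.

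Next I would feed in the central scaling. Conjugating $\tau_{e_1}$ by $\mu I$ yields $\tau_{\mu e_1}$, so $u(\mu):=\psi(\tau_{\mu e_1})=\psi(\mu I)\,g\,\psi(\mu I)^{-1}$ and $\mu\mapsto u(\mu)$ is a homomorphism from $(\kk,+)$. If $\psi|_{\GL_d(\kk)}=\id$ then $u(\mu)=(ax_1+\mu b,c_2x_2,\dots,c_dx_d)$, and the identity $u(\mu)u(\nu)=u(\mu+\nu)$ forces $a=1$ and $c_j=1$, whence $g=\tau_{be_1}$ with $b\neq0$. Transporting by any $A\in\GL_d(\kk)$ with $Ae_1=v$ gives $\psi(\tau_v)=A\tau_{be_1}A^{-1}=\tau_{bv}$ for all $v$, so $\psi(N)=N$, $\psi(\aff_d(\kk))=\langle\GL_d(\kk),N\rangle=\aff_d(\kk)$, and $\psi|_{\aff_d(\kk)}$ is conjugation by $bI$. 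If instead $\psi|_{\GL_d(\kk)}(g)=g^\vee$, then $\psi(\mu I)=\mu^{-1}I$ gives $u(\mu)=(ax_1+\mu^{-1}b,c_2x_2,\dots,c_dx_d)$, and $u(\mu)u(\nu)=u(\mu+\nu)$ now forces $(\mu^{-1}+\nu^{-1})b=(\mu+\nu)^{-1}b$ for all admissible $\mu,\nu$; since $\kk$ is infinite this equality cannot hold identically unless $b=0$, in which case $g\in\GL_d(\kk)$, contradicting $\tau_{e_1}\notin\GL_d(\kk)=\psi^{-1}(\GL_d(\kk))$. This excludes the contragredient case.

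Finally, undoing the normalisation turns $\psi|_{\aff_d(\kk)}\colon g\mapsto bI\,g\,(bI)^{-1}$ into $\varphi(g)=h\,{}^\alpha(bI\,g\,(bI)^{-1})\,h^{-1}=h'({}^\alpha g)h'^{-1}$ with $h'=\alpha(b)h\in\GL_d(\kk)$, together with $\varphi(\aff_d(\kk))=\aff_d(\kk)$, which is the assertion. The step I expect to be the main obstacle is the rigidity in the middle: showing that commuting with the single, well-chosen subtorus $\Delta$ already constrains $\psi(\tau_{e_1})$ to the triangular normal form and hence, via $\aut(\A^1_\kk)=\aff_1(\kk)$, to an affine map, and that the one-parameter homomorphism relation then simultaneously identifies it as a genuine translation and annihilates the contragredient alternative. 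Getting the correct small torus and exploiting one-dimensional rigidity is the crux; the rest is bookkeeping with inner and field automorphisms.
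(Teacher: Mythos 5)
Your proof is correct and follows the same overall strategy as the paper's: normalise $\varphi|_{\GL_d(\kk)}$ via Theorem~\ref{thm:dieudonne}, determine the image of a single translation by exploiting its centraliser inside $\GL_d(\kk)$, rule out the contragredient alternative, and conclude by transitivity of $\GL_d(\kk)$-conjugation on non-trivial translations. The tactical choices differ slightly, mostly to your advantage: the paper centralises against the embedded copy of $\GL_{d-1}(\kk)$ acting on $x_2,\dots,x_d$ rather than your torus $\Delta$, rules out $g\mapsto g^\vee$ by commutation with the elementary map $(x_1,\dots,x_d)\mapsto(x_1+x_2,x_2,\dots,x_d)$, and then pins the remaining constants using the relation $tft^{-1}=f^2$, which forces a separate characteristic-$2$ case; your single additive relation $u(\mu)u(\nu)=u(\mu+\nu)$ does both jobs at once and is characteristic-free. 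Two points should be tightened. First, the normal-form step: to know that the restriction of $g$ to the $x_1$-axis is an automorphism of $\A^1_\kk$ you should note that $g^{-1}$ also commutes with $\Delta$ and hence also preserves the axis (this gives $\phi_1$ affine in any characteristic, since degrees multiply under composition); and the constancy of the $\phi_j$, $j\geq2$, does not follow from the axis argument at all --- it needs its own line, e.g.\ composing the normal forms of $g$ and $g^{-1}$ to get $\phi_j(x_1)\,\psi_j(\phi_1(x_1))=1$ in $\kk[x_1]$, or observing that $\det(Dg)=\phi_1'\cdot\prod_{j\geq2}\phi_j$ must be a nonzero constant. Second, like the paper, you apply Dieudonn\'e's theorem to $\GL_d(\kk)$ and hence implicitly assume $d\geq2$; the reduction ``we may assume $d\geq2$'' is not justified, though this is a defect shared with the paper's own proof.
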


\begin{proof}
By Theorem~\ref{thm:dieudonne}, up to conjugation in $\GL_d(\kk)$ and up to a field automorphism of $\kk$, we can assume that $\varphi|_{\GL_d(\kk)}$ is the identity on $\GL_d(\kk)$ or given by $g\mapsto g^{\vee}$. 

Let $f\colon(x_1,\dots,x_d)\mapsto (x_1+a,x_2,\dots,x_d)$ for some $a\in\kk^*$. 
Consider $\GL_{d-1}(\kk)$ as a subgroup of $\GL_d(\kk)$ embedded as $g\mapsto[(x_1,\dots,x_d)\mapsto(x_1,g(x_2,\dots,x_d))]$. 
We write $\varphi(f)\colon x\mapsto(p_1(x),\dots,p_d(x))$ for $x=(x_1,\dots,x_d)$ and polynomials $p_1,\dots,p_d\in\kk[x_1,\dots,x_d]$.
Observe that $f$ commutes with every element of $\GL_{d-1}(\kk)$.
As $\varphi(\GL_{d-1}(\kk))=\GL_{d-1}(\kk)$, it follows that the same is true for $\varphi(f)$, and hence $p_i(x)=x_ib$ for some $b\in\kk^*$ for $i=2,\dots,d$.
Moreover, $p_1(x)$ is of the form $cx_1+d$ for some $c,d\in\kk$. Note that $d\neq0$ because $\varphi$ is injective. It follows in particular, that $\varphi(\aff_d(\kk))=\aff_d(\kk)$.

Consider $h\colon(x_1,\dots,x_d)\mapsto(x_1+x_2,x_2,x_3,\dots,x_d)$. Then $f$ commutes with $h$ but $\varphi(f)$ does not commute with $h^{\vee}$. It follows that $\varphi|_{\GL_d(\kk)}$ is the identity on $\GL_d(\kk)$. Then $\varphi(f)h=h\varphi(f)$, which implies that $d=b$.

If $\car(\kk)=2$, then $f^2=\id$ and hence $\varphi(f)^2=\id$, which implies that $b=1$. 
Now, suppose that $\car(\kk)\neq2$ and consider $t\colon(x_1,\dots,x_d)\mapsto(2x_1,x_2,\dots,x_d)$. We have $tft^{-1}=f^2$, so also $t\varphi(f)t^{-1}=\varphi(f)^2$, which again implies $b=1$.

After conjugating $\varphi$ with a suitable element of the center of $\GL_d(\kk)$, we obtain $\varphi(f)=f$. 
The group $\GL_d(\kk)$ acts transitively by conjugation on the set of translations different from $\id$, therefore $\varphi|_{\aff_d(\kk)}=\id_{\aff_d(\kk)}$.
\end{proof}

\begin{proof}[Proof of Theorem~\ref{main:aut}]
The theorem follows from Lemma~\ref{lem:GLtoGL}, Lemma~\ref{lem:aff_id} and
Proposition~\ref{pro:determined_by_aff}.
\end{proof}

\section{Birational diffeomorphisms}\label{sec:birdiff}

Let $X_\R$ be a real projective rational variety, such that the set of $\R$-points $X_\R(\R)$ is non-empty. We can now consider the subgroup $\BirDiff({X_\R(\R)})\subset\Bir(X)$ of {\it birational diffeomorphisms}, which is the subgroup of birational transformations without any real indeterminacy points. In \cite{MR2531367} it is shown that $\BirDiff({\p_\R^2(\R)})$ and $\BirDiff({\p_\R^1(\R)\times\p_\R^1(\R)})$ are dense subgroups of the diffeomorphism groups $\mathcal{C}^\infty(\p^2(\R))$ and $\mathcal{C}^\infty({\p_\R^1(\R)\times\p_\R^1(\R)})$ respectively, with respect to the $\mathcal{C}^\infty$-topology.

The Zariski (resp. Euclidean) topology on $\Cr_d(\R)$ induces the {\em Zariski} (resp. {\em Euclidean}) topology on $\BirDiff(\p_\R^d(\R))$.

\begin{remark}
\label{rmk:approx_R}
The proofs of Lemma~\ref{lem:deformation_exists} and Proposition~\ref{pro:determined_by_aut} can be repeated word by word for $\BirDiff(\p_\R^d(\R))$ instead of $\Cr_d(\R)$. We obtain that any 
group homomorphism of $\BirDiff(\p_\R^d(\R))$ that is Zariski or Euclidean continuous is uniquely determined by its restriction to $\PGL_{d+1}(\R)$.
\end{remark}

\begin{proposition}\label{main:birdiff}
	Let $\varphi\colon\BirDiff(\p_\R^d(\R))\to\BirDiff(\p_\R^d(\R))$ be a group automorphism  that is a homeomorphism with respect to the Zariski topology on the group $\BirDiff(\p_\R^d(\R))$. 
	Then either $\varphi$ is an inner automorphism, or 
	there exists $f\in\BirDiff(\p_\R^d(\R))$ such that $\varphi(g)=fg^{\vee}f^{-1}$ for all $g\in\PGL_{d+1}(\R)$ and this defines $\varphi$ uniquely. 
\end{proposition}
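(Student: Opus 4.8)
The plan is to run the strategy behind Theorems~\ref{main:cremona} and~\ref{main:euclidean} with $\BirDiff(\p_\R^d(\R))$ in place of $\Cr_d(\R)$, paying attention to two features of the present setting: the field $\R$ has no non-trivial automorphisms, so the field-automorphism term never appears; and Lemma~\ref{lem:extension} is a statement about $\Cr_d$ and not about $\BirDiff$, so it is no longer available to discard the transpose-inverse automorphism of $\PGL_{d+1}(\R)$. It is precisely this last point that produces the dichotomy in the statement.

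First I would reduce to the restriction $\varphi|_{\PGL_{d+1}(\R)}$. Since $\PGL_{d+1}(\R)=\aut(\p_\R^d)\subset\BirDiff(\p_\R^d(\R))\subset\Cr_d(\R)$ and $\varphi|_{\PGL_{d+1}(\R)}$ is injective and Zariski continuous, Proposition~\ref{prop:emb_PGL} shows that $\varphi(\PGL_{d+1}(\R))$ is conjugate in $\Cr_d(\R)$ to a subgroup of $\PGL_{d+1}(\R)$. Arguing as in the proof of Theorems~\ref{main:cremona} and~\ref{main:euclidean} — applying the same to $\varphi^{-1}$ and using that $\PGL_{d+1}(\R)$ is its own normaliser in $\Cr_d(\R)$ — one obtains an element $f\in\Cr_d(\R)$ with $f\varphi(\PGL_{d+1}(\R))f^{-1}=\PGL_{d+1}(\R)$. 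Then $g\mapsto f\varphi(g)f^{-1}$ is a group automorphism of $\PGL_{d+1}(\R)$, so Theorem~\ref{thm:dieudonne} applies; as $\R$ has no non-trivial field automorphisms, after absorbing an inner automorphism of $\PGL_{d+1}(\R)$ into $f$ we are left with exactly the two possibilities $\varphi(g)=fgf^{-1}$ and $\varphi(g)=fg^{\vee}f^{-1}$ for all $g\in\PGL_{d+1}(\R)$.

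To finish I would invoke the $\BirDiff$-version of Proposition~\ref{pro:determined_by_aut} recorded in Remark~\ref{rmk:approx_R}, by which a Zariski-continuous automorphism of $\BirDiff(\p_\R^d(\R))$ is determined by its restriction to $\PGL_{d+1}(\R)$. Provided $f$ lies in $\BirDiff(\p_\R^d(\R))$, the first case means that $\varphi$ coincides on $\PGL_{d+1}(\R)$ with the inner automorphism $x\mapsto fxf^{-1}$ of $\BirDiff(\p_\R^d(\R))$, hence equals it and is inner; the second case means that $\varphi$ restricts to $g\mapsto fg^{\vee}f^{-1}$ and is therefore the unique such automorphism, which is the asserted alternative. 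The reason both cases genuinely survive here, in contrast with Theorems~\ref{main:cremona} and~\ref{main:euclidean}, is exactly that Lemma~\ref{lem:extension} is unavailable for $\BirDiff$.

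The step I expect to be the main obstacle is the proviso just used: the element $f$ produced by Proposition~\ref{prop:emb_PGL} and the underlying regularisation a priori only lies in $\Cr_d(\R)$, whereas the statement requires $f\in\BirDiff(\p_\R^d(\R))$ (and, in the inner case, conjugation by an element of $\BirDiff$, which by triviality of the centraliser of $\PGL_{d+1}(\R)$ again forces $f\in\BirDiff(\p_\R^d(\R))$). What must be shown is that $f^{-1}\PGL_{d+1}(\R)f\subset\BirDiff(\p_\R^d(\R))$ forces $f$ to have no real indeterminacy points. I would attack this using that $\PGL_{d+1}(\R)$ acts transitively on $\p_\R^d(\R)$: analysing how the real indeterminacy locus of $f^{-1}gf$ varies as $g$ ranges over $\PGL_{d+1}(\R)$ should pin down the real indeterminacy locus of $f$ itself, and combining this with a regularisation carried out over $\R$ together with Ch\^atelet's theorem should let one replace $f$ by an element of $\BirDiff(\p_\R^d(\R))$.
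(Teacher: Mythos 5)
Your reduction scheme coincides with the paper's: apply Proposition~\ref{prop:emb_PGL} to $\varphi|_{\PGL_{d+1}(\R)}$, classify the resulting automorphism of $\PGL_{d+1}(\R)$ by Theorem~\ref{thm:dieudonne} (the field-automorphism term vanishing because $\R$ has no non-trivial field automorphism), keep both alternatives because Lemma~\ref{lem:extension} says nothing about $\BirDiff(\p_\R^d(\R))$, and conclude uniqueness from Remark~\ref{rmk:approx_R}. The problem is that the step you explicitly defer --- that the conjugating element $f$, a priori only in $\Cr_d(\R)$, actually lies in $\BirDiff(\p_\R^d(\R))$ --- is the \emph{only} step of this proposition that is not a citation of earlier results; it is precisely the new content of the paper's proof. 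Phrases like ``should pin down the real indeterminacy locus'' and ``should let one replace $f$'' record an intention, not an argument, so as written the proposal has a genuine gap. There is also a secondary ordering flaw: you run the normaliser argument of Theorems~\ref{main:cremona} and~\ref{main:euclidean} to get $f\varphi(\PGL_{d+1}(\R))f^{-1}=\PGL_{d+1}(\R)$ \emph{before} knowing $f\in\BirDiff(\p_\R^d(\R))$; but that argument proceeds by replacing $\varphi$ with $f\varphi(\cdot)f^{-1}$, which is an automorphism of the group under study only if $f$ belongs to it. For $\Cr_d(\R)$ this is automatic, for $\BirDiff(\p_\R^d(\R))$ it is not; the paper avoids the issue by first placing $f$ in $\BirDiff(\p_\R^d(\R))$ and normalising afterwards.

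For comparison, here is the mechanism behind the step you left open, which the paper compresses into a single sentence invoking exactly the transitivity you name. Write $f\varphi(g)=\rho(g)f$ with $\rho(g)\in\PGL_{d+1}(\R)$. Since $\varphi(g)$ and $\varphi(g)^{-1}=\varphi(g^{-1})$ have no real indeterminacy points, comparing real indeterminacy loci of the two sides shows that the real indeterminacy locus $B$ of $f$ satisfies $\varphi(g)^{-1}(B)=B$ for all $g$; symmetrically, $\varphi(g)f^{-1}=f^{-1}\rho(g)$ shows that the real indeterminacy locus $B'$ of $f^{-1}$ is invariant under $\rho(\PGL_{d+1}(\R))$. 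Because $\varphi$ is a homeomorphism, $\varphi(\PGL_{d+1}(\R))$ is closed of bounded degree (Lemma~\ref{lem:bounded_degree}) and of the same Krull dimension as $\PGL_{d+1}(\R)$, whence $\rho(\PGL_{d+1}(\R))$ is Zariski dense in $\PGL_{d+1}(\R)$; the proper real algebraic set $B'$ is then invariant under all of $\PGL_{d+1}(\R)$, and transitivity of the $\PGL_{d+1}(\R)$-action on $\p_\R^d(\R)$ forces $B'=\emptyset$. With $f^{-1}$ everywhere defined and surjective on $\p_\R^d(\R)$, the identity $\varphi(g)(f^{-1}(y))=f^{-1}(\rho(g)(y))$ together with the same density-and-transitivity reasoning forces $B=\emptyset$ as well, so $f\in\BirDiff(\p_\R^d(\R))$, after which your remaining steps go through. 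Note finally that your proposed detour through ``a regularisation carried out over $\R$ together with Ch\^atelet's theorem'' points in the wrong direction: that machinery is already contained in Proposition~\ref{prop:emb_PGL} and is insensitive to real indeterminacy points; what is needed is the invariance argument just described.
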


\begin{proof}
By Proposition~\ref{prop:emb_PGL}, there exists $f\in\Cr_d(\R)$ such that $f\varphi(\PGL_{d+1}(\R))f^{-1}\subset\PGL_{d+1}(\R)$. 
Since $\PGL_{d+1}(\R)$ acts transitively on $\p_\R^d(\R)$, the maps $f$ and $f^{-1}$ have no base-points in $\p_\R^d(\R)$, and hence $f\in\BirDiff(\p_\R^d(\R))$. 
So, we can assume that $\varphi(\PGL_{d+1}(\R))=\PGL_{d+1}(\R)$. 
By Theorem~\ref{thm:dieudonne}, up to conjugation in $\PGL_{d+1}(\R)$, we have that $\varphi|_{\PGL_{d+1}(\R)}=\id_{\PGL_{d+1}(\R)}$ or $\varphi(g)=g^{\vee}$ for all $g\in\PGL_{d+1}(\R)$. 
By Remark~\ref{rmk:approx_R}, $\varphi$ is uniquely determined by its restriction on $\PGL_{d+1}(\R)$.
\end{proof}

Despite best efforts, the authors were not able to determine whether the automorphism $\PGL_{d+1}(\R)$ given by $g\mapsto {}^\alpha\!g^\vee$ extends to an endomorphism of $\BirDiff(\p_\R^d(\R))$.

\begin{remark}
For any field $\kk$, we can define the group $\mathrm{Bireg}(\p_\kk^d(\kk))$ as the group of elements $f\in\Cr_d(\kk)$ such that $f$ and $f^{-1}$ have no base-points in $\p_\kk^d(\kk)$. 
If $\kk$ is of characteristic zero, the analogous statement of Proposition~\ref{main:birdiff} for $\mathrm{Bireg}(\p_\kk^d(\kk))$ holds with the same proof as for $\BirDiff(\p_\R^d(\R))$.
\end{remark}

\bibliographystyle{abbrv}
\bibliography{bibliography_cu}

\end{document}